\documentclass[12pt,reqno]{amsart}

\usepackage{graphicx, fullpage,bbm}
\usepackage{amssymb, amsmath, amsthm,tikz, bm,bbm}
\usepackage{epstopdf,enumitem,esint}
\usepackage{verbatim,mathabx}
\usepackage{hyperref}
\hypersetup{hidelinks}
\DeclareGraphicsRule{.tif}{png}{.png}{`convert #1 `dirname #1`/`basename #1 .tif`.png}

\usepackage[margin=1.1in]{geometry}

\newtheorem{Theorem}{Theorem}[section]
\newtheorem{Lemma}{Lemma}[section]
\newtheorem{Proposition}{Proposition}[section]

\theoremstyle{definition}
\newtheorem{Definition}{Definition}[section]

\theoremstyle{remark}
\newtheorem{Remark}{Remark}[section]

\numberwithin{equation}{section}

\renewcommand{\u}{{\bf u}}

\newcommand{\R}{{\mathbb R}}
\newcommand{\Dv}{{\rm div}}
\newcommand{\tr}{{\rm tr}}
\newcommand{\T}{{\mathbb T}}

\newcommand{\id}{\mathrm{Id}}

\newcommand{\trace}{\mathrm{tr}}

\renewcommand{\v}{{\bf v}}

\newcommand{\m}{{\bf m}}

\newcommand{\eps}{{\varepsilon}}

\author{Robin Ming Chen}
\address{Department of Mathematics, University of Pittsburgh, PA 15260, USA.}
\email{mingchen@pitt.edu}
\author{Alexis Vasseur}
\address{Department of Mathematics, University of Texas at Austin, TX 78712, USA.}
\email{vasseur@math.utexas.edu}

\author{Dehua Wang}
\address{Department of Mathematics, University of Pittsburgh, PA 15260, USA.}
\email{dhwang@pitt.edu}

\author{Cheng Yu}
\address{Department of Mathematics, University of Florida, FL 32601, USA.}
\email{chengyu@ufl.edu}

\title[Incompressible Limit of Weak Solutions]
{Universality in the Low Mach number limit via a convex integration framework}

\keywords{Incompressible limit, weak solutions, subsolutions, convex integration}
\subjclass[2020]{35L60, 35Q31, 76M45}


\begin{document}

\begin{abstract}

We study the low Mach number limit of the compressible Euler equations through the lens of convex integration. For any prescribed $L^2$ weak solution of the incompressible Euler equations, we construct a corresponding family of weak solutions to the compressible Euler equations via a refined convex integration scheme. We then prove that, as the Mach number tends to zero, this family of solutions converges strongly to the given incompressible solution. This result demonstrates that the incompressible system acts as a universal attractor in this setting: every incompressible flow can be realized as the limit of convex integration solutions to the compressible system. Our approach highlights a new form of universality for singular limits and provides a rigorous framework for understanding the incompressible limit from the perspective of weak solution theory.

\end{abstract}

\maketitle 

\section{Introduction}

The incompressible Euler equations are universally regarded as the formal low Mach number limit of the compressible Euler equations. This physical premise is rooted in the observation that when fluid velocities are much smaller than the speed of sound, density variations become negligible, and the flow behaves as if it were incompressible \cite{KM2, Majda, MaS}. The mathematical theory of this singular limit aims to rigorously justify the transition from solutions of the compressible Euler system
\begin{equation} 
\label{compressible Euler}
\left\{
\begin{split}
&\partial_t \rho + \Dv (\rho \v)= 0,
\\&\partial_t (\rho \v) + \Dv (\rho \v \otimes \v) + \nabla P = 0,
\end{split}
\right.
\end{equation} 
where $\rho$ is the density, $\v$ the velocity, and $P(\rho) = a \rho^\gamma$ the pressure ($a > 0$, $\gamma > 1$), to those of the incompressible Euler system
\begin{equation}\label{incom Euler} 
\left\{
\begin{split}&
\partial_t \u + \u\cdot\nabla\u+\nabla \pi=0,
\\&  \Dv \u=0.
\end{split}\right.
\end{equation}
To capture this low Mach number regime, one introduces a scaling where time is slowed down and velocity is rescaled by a small parameter
\[
{\rho}(x,t) =: \rho_\delta(x,\delta t), \quad {\v}(x,t) =: \delta \v_\delta(x,\delta t).
\]
This leads from \eqref{compressible Euler} to the rescaled compressible Euler equations
\begin{equation}
\label{scalling system 111}
\left\{
\begin{split}
&\partial_t\rho_{\delta}+\Dv(\rho_{\delta} \v_{\delta})=0,
\\&\partial_t(\rho_{\delta} \v_{\delta})+\Dv(\rho_{\delta} \v_{\delta}\otimes \v_{\delta})+\nabla\frac{\rho_{\delta}^\gamma}{\delta^2}=0.
\end{split}
\right.
\end{equation} 
Formally, as $\delta \to 0$, we expect $\rho_\delta$ to converge to a constant (say, $1$), and the velocity field to become divergence-free, recovering \eqref{incom Euler}. 

The study of this limit was initiated by the seminal works of Ebin \cite{Ebin} and Klainerman--Majda \cite{KM, KM2}, who treated the case of smooth, well-prepared initial data. Subsequent research expanded this theory to various contexts, including ill-prepared data \cite{Ukai} , bounded domains \cite{Scho}, non-isentropic flows \cite{MS2,MS0}, and the framework of weak solutions for the Navier--Stokes equations \cite{Feireisl-incom,LM, LM2, LM3}. This extensive body of work (see also \cite{BDGL,BFH, CJ, FKMV, Fu,SEC}) typically relies on strong compactness arguments or relative entropy methods. These techniques require sufficient a priori estimates or structural assumptions on the solutions, effectively filtering out highly oscillatory or turbulent behavior.
To obtain global-in-time results, Feireisl--Klingenberg--Markfelder \cite{FKM} introduced the framework of dissipative measure-valued (DMV) solutions, establishing convergence to the smooth incompressible solution for finite-energy data. Their result demonstrates that under the DMV framework, the incompressible limit acts as a selective filter for families of approximate solutions. 

In contrast, our work asks if the incompressible system is a \emph{universal attractor}: can every incompressible weak solution be targeted and approximated by a sequence of weak solutions to the compressible system?

\subsection*{A New Perspective from Convex Integration}
This question is motivated by the modern theory of weak solutions offered by \emph{convex integration}. In a marked departure from prior approaches, this framework has revealed that for a dense set of initial data, the incompressible and compressible isentropic Euler system admits infinitely many weak solutions, even when imposing the energy inequality as an admissibility criterion \cite{CVY,SW}. The main achievement in our prior work \cite{CVY} is the development of a convex integration scheme capable of handling a general positive definite Reynolds stress tensor, a  generalization beyond earlier methods.

We now re-examine the foundational low Mach number limit through this lens. The discovery of ubiquitous non-uniqueness prompts us to shift the analytical paradigm: instead of investigating whether a given family of compressible solutions converges, we constructively address whether any arbitrary incompressible weak solution $\u$ can be realized as the limit (as $\delta \to 0$) of a sequence of compressible weak solutions built via convex integration. An affirmative answer would demonstrate a form of universality, thereby establishing that the incompressible system is indeed a universal attractor within this framework, independent of the small-scale turbulent microstructure permitted by the convex integration machinery.

\subsection*{Strategy and the Key Innovation}
Our strategy builds directly on the framework of \cite{CVY} and proceeds in three conceptual stages. First, from a given incompressible solution $\u$, we construct a smooth incompressible subsolution $(\u^\eps, \pi^\eps, R^\eps)$ to a relaxed system, with a positive definite Reynolds stress $R^\eps > 0$. Second, we design a corresponding compressible subsolution $(\rho_\delta, V_\delta, R_\delta)$ to \eqref{scalling system 111} that approximates the incompressible one, ensuring $\rho_\delta \sim 1$ and preserving the crucial property $R_\delta > 0$. Third, we apply the convex integration scheme of \cite{CVY} to this subsolution to generate infinitely many true weak solutions $(\rho_\delta, \widehat{V}_\delta)$ of \eqref{scalling system 111}. 

The central new challenge, absent in \cite{CVY}, is the requirement of uniform approximation: the constructed compressible solutions must converge strongly to $\u$ as $\delta \to 0$. This demand for controlled asymptotics forces a critical refinement of the convex integration technique. Our principal innovation is the introduction of a new $L^2$-type constraint into the non-constructive $L^\infty$ convex integration scheme. We define the set of admissible perturbations $X_0$ to include only those $\widetilde{V}$ satisfying
\begin{equation}\label{eq new constr}
\left| \int \frac{V_\delta \cdot \widetilde{V}}{\rho_\delta} \,dxdt \right| \le \frac18 \int \frac{|\widetilde{V}|^2}{\rho_\delta}\,dxdt,
\end{equation}
where $(\rho_\delta, V_\delta, R_\delta)$ is the underlying subsolution. This constraint is the key that unlocks a uniform energy estimate. It enables us to prove that the kinetic energy of the perturbations is controlled by the trace of the Reynolds stress:
\begin{equation}\label{eq unif}
\int \frac{|\widetilde{V}|^2}{\rho_\delta}\,dxdt \le C \int \trace R_\delta \,dxdt.
\end{equation}
Since our subsolution construction ensures that $\trace R_\delta$ remains $O(\delta)$, this estimate 
tames the turbulent oscillations uniformly with respect to the Mach number, ensuring their vanishing in the limit and thereby securing the desired strong convergence.

Such a control of the {\it turbulent} perturbations by the Reynolds tensor comes naturally when considering the $C^\alpha$ theory of convex integration (the constructivist theory). It was already used, in this context,  to obtain the inviscid limit of the incompressible Navier--Stokes equation for instance (see \cite{Annals}). However, to our knowledge, it was not yet derived for the $L^\infty$ theory of convex integration (using the Bair\'e category theorem). 

\subsection*{Main Result}
Employing this convex integration strategy, we prove that the incompressible system is indeed a universal attractor within this framework. For the precise notions of weak solutions to the incompressible Euler system \eqref{incom Euler} and the rescaled compressible Euler system \eqref{scalling system 111}, see the definitions below.
\begin{Definition}
\label{definition of weak solution incom}
We say $\u \in L^{\infty}(0,T;L^2(\mathbb{T}^n)) $ is a weak solution of \eqref{incom Euler} with intial data $\u^0 \in L^2$ if it is divergence-free in the sense of distribution and 
\begin{itemize}
\item for any $\phi \in C^\infty_c(\mathbb{T}^n \times [0, T); \R^n)$ with $\Dv \, \phi = 0$,
\begin{equation}\label{weak incomp euler eqn}
\int^\infty_0 \int_{\mathbb{T}^n} \u \cdot \partial_t \phi + \u \otimes \u : \nabla \phi  \,dxdt =  -\int_{\mathbb{T}^n} \u^0 \cdot \phi(\cdot, 0) \,dx.
\end{equation}
\end{itemize}
\end{Definition}

\begin{Definition}
\label{definition of weak solutions compressible} 
For $\gamma>1$, we say $(\rho_{\delta}, \rho_{\delta} \v_{\delta})$ is a weak solution of \eqref{scalling system 111} with initial value $(\rho^0_{\delta},\m_{\delta}^0) \in L^\gamma \times L^2$ if the following holds true.
\begin{itemize}
\item $\rho_{\delta}\in L^{\infty}(0,T;L^{\gamma}(\mathbb{T}^n)), \;\;\sqrt{\rho_{\delta}} \v_{\delta}\in L^{\infty}(0,T;L^2(\mathbb{T}^n))$;
\item  for any $\varphi \in C^\infty_c(\mathbb{T}^3 \times [0, T))$,
\[
\int_0^\infty \int_{\mathbb{T}^n} \left( \rho_{\delta} \, \partial_t \varphi + \rho_{\delta} \v_{\delta} \cdot \nabla \varphi \right) \, dx \, dt + \int_{\mathbb{T}^n} \rho_{\delta}^0(x) \varphi(0,x) \, dx = 0;
\]
 
 \item for any $\phi \in C^\infty_c(\mathbb{T}^n \times [0, T); \R^n)$,
\begin{equation*}
\begin{split}
\int^\infty_0 \int_{\mathbb{T}^n} \rho_{\delta} \v_{\delta} \cdot \partial_t \phi &+ \rho_{\delta} \v_{\delta} \otimes \v_{\delta} : \nabla \phi  \,dxdt +\int_0^\infty\int_{\mathbb{T}^n}\frac{\rho_{\delta}^{\gamma}}{\delta^2}\Dv \phi\,dx\,dt
\\&=  -\int_{\mathbb{T}^n}\m_{\delta}^0 \cdot \phi(\cdot, 0) \,dx.
\end{split}
\end{equation*}
\end{itemize}
\end{Definition}

 \smallskip
 
Our main result is stated as follows.
\begin{Theorem}
\label{main result}
Let $n = 2$ or $3$ and $\gamma > 1$. For any given $T > 0$, consider any weak solution $\u\in L^\infty(0,T;L^2(\mathbb{T}^n))$ to the incompressible Euler equation \eqref{incom Euler} with initial data $\u^0\in L^2(\mathbb{T}^n)$, as defined in Definition \ref{definition of weak solution incom}. Then for any $T' < T$, 
 there exists a small number $\delta_0>0$  such that for any Mach number $\delta<\delta_0$, there exists a family of initial data 
$$
(\rho_{\delta}^0,\m_{\delta}^0)\in C^0(\mathbb{T}^n)\times  L^2(\mathbb{T}^n)
$$
with the following properties.
\begin{itemize}
\item For any fixed $\delta>0$, there exist infinitely many weak solutions $(\rho_{\delta},\rho_{\delta}\widehat{\v}_{\delta})$ on $(0, T')$ to the rescaled compressible Euler equations \eqref{scalling system 111} emanating from the above data $(\rho_{\delta}^0,\m_{\delta}^0)$, in the sense of Definition \ref{definition of weak solutions compressible}.
\item The following asymptotic limits hold:
\begin{equation*}
\rho_{\delta}\to 1\;\;\;\text{ in } C^0([0,T']\times \mathbb{T}^n),\;\;\;\;\; \rho_{\delta}\widehat{\v}_{\delta}\to \u \;\text{ in } L^{2}(0,T';L^2(\mathbb{T}^n)),
\end{equation*}
and the initial data 
$$
\m_{\delta}^0\to \u^0\;\;\;\text{ weakly in }\;\; L^2(\mathbb{T}^n),
$$
as $\delta$ tends to zero.
\end{itemize}
\end{Theorem}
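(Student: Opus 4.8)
The plan is to proceed in the three conceptual stages outlined in the introduction, with the new $L^2$-constraint \eqref{eq new constr} as the technical heart of the argument.

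\textbf{Stage 1: Incompressible subsolution.} Starting from the given weak solution $\u \in L^\infty(0,T;L^2(\T^n))$, I would first regularize it in space-time by mollification to obtain a smooth approximate solution $\u^\eps$. Since $\u$ only solves \eqref{incom Euler} in the weak sense, $\u^\eps$ will not be an exact solution; instead it satisfies a relaxed system with a defect measured by a smooth symmetric tensor. The standard maneuver (as in the incompressible convex integration literature) is to add a small multiple of the identity, $\lambda \id$ for a suitable small constant $\lambda = \lambda(\eps)$, to push the Reynolds stress $R^\eps$ to be strictly positive definite on $[0,T'] \times \T^n$, at the cost of a controlled error. Crucially, one arranges $\trace R^\eps$ to be small (in $L^1$ of space-time) once $\eps$ is small — this is where the energy of the original $\u$ is used, together with the fact that mollification error tends to zero in $L^2_{loc}$. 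The initial data matching $\u^\eps(\cdot,0) \rightharpoonup \u^0$ weakly in $L^2$ comes from the weak formulation \eqref{weak incomp euler eqn} and properties of mollifiers.

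\textbf{Stage 2: Compressible subsolution.} Next I would build $(\rho_\delta, V_\delta, R_\delta)$ solving the relaxed version of \eqref{scalling system 111}, i.e. $\partial_t \rho_\delta + \Dv(\rho_\delta V_\delta) = 0$ and $\partial_t(\rho_\delta V_\delta) + \Dv(\rho_\delta V_\delta \otimes V_\delta) + \nabla(\rho_\delta^\gamma/\delta^2) = \Dv(\rho_\delta R_\delta)$ with $R_\delta > 0$. The natural ansatz is $\rho_\delta = 1 + \delta^2 \sigma_\delta$ for a smooth corrector $\sigma_\delta$, and $V_\delta = \u^\eps + \delta(\text{small correction})$, choosing $\eps = \eps(\delta) \to 0$ slowly. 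Plugging into the continuity equation forces an elliptic-parabolic relation for $\sigma_\delta$ (essentially $\partial_t \sigma_\delta = -\Dv \u^\eps / \delta^2 + \dots$, which is controlled because $\Dv \u^\eps = O(\eps^{\infty})$ after mollification of a divergence-free field — actually $\Dv\u^\eps = 0$ exactly if one mollifies in space); plugging into the momentum equation and expanding $\rho_\delta^\gamma/\delta^2 = (1/\delta^2) + \gamma \sigma_\delta + O(\delta^2)$ identifies the pressure-type term with $\nabla \pi^\eps$ up to errors, and the leftover goes into $R_\delta$. One checks $R_\delta = R^\eps + O(\delta)$ in $C^0$, so $R_\delta > 0$ for $\delta$ small, and $\trace R_\delta = \trace R^\eps + O(\delta)$, hence $\int \trace R_\delta \,dxdt = O(\delta)$ after the joint choice of $\eps(\delta)$. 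The momentum initial data is $\m_\delta^0 = \rho_\delta(\cdot,0) V_\delta(\cdot,0) \rightharpoonup \u^0$.

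\textbf{Stage 3: Convex integration with the new constraint.} Here I would invoke the scheme of \cite{CVY} applied to the subsolution $(\rho_\delta, V_\delta, R_\delta)$, but run the Baire-category argument inside the space $X_0$ of perturbations $\widetilde V$ additionally satisfying the quadratic constraint \eqref{eq new constr}. The key point is that this constraint is preserved under the $L^\infty$-weak-$*$ closure (it is a closed convex-type condition once one fixes the subsolution), so the standard functional-analytic setup still produces a residual set of exact weak solutions $\widehat\v_\delta = V_\delta + \widetilde V/\rho_\delta$ (or the appropriate form). For each such solution, the energy identity combined with \eqref{eq new constr} yields, by completing the square, the bound \eqref{eq unif}: $\int |\widetilde V|^2/\rho_\delta \,dxdt \le C \int \trace R_\delta \,dxdt = O(\delta)$. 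Since $\rho_\delta \to 1$ uniformly and $V_\delta \to \u$ in $L^2$, this forces $\rho_\delta \widehat\v_\delta \to \u$ in $L^2(0,T';L^2)$. Infinitely many solutions come from the Baire-category argument producing a residual (hence uncountable) set.

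\textbf{Main obstacle.} The crux — and the genuinely new difficulty — is verifying that the added $L^2$-type constraint \eqref{eq new constr} is compatible with the convex integration machinery: one must check that the set $X_0$ is nonempty (the zero perturbation works, since both sides vanish), that it is closed in the relevant topology, and most importantly that the \emph{perturbation property} (the ability to increase the energy by a definite amount at points where $\trace R_\delta > |\widetilde V|^2/\rho_\delta$) survives the restriction to $X_0$. This requires showing that the localized plane-wave perturbations used in the inductive step can be chosen to also satisfy the quadratic constraint, which is plausible because those perturbations have mean zero and high frequency, making the cross term $\int V_\delta \cdot \widetilde V/\rho_\delta$ negligible compared to $\int |\widetilde V|^2/\rho_\delta$ — but this must be made quantitative and uniform in $\delta$. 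Establishing this uniform perturbation property, and then extracting \eqref{eq unif} cleanly from the energy balance, is where the bulk of the work lies.
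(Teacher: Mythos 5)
Your three-stage outline coincides with the paper's strategy (mollified incompressible subsolution with $R^\eps\ge\tfrac{\eps}{2}\id_n$; a density ansatz $\rho_\delta=1+\delta^2(\cdot)$ tied to $\pi^\eps$ plus a small momentum corrector fixing the continuity equation, with the leftover going into an $O(\delta)$ tensor $\tilde R_\delta$; then the $L^\infty$ convex integration of \cite{CVY} restricted by \eqref{eq new constr}). However, the step you explicitly defer as ``where the bulk of the work lies'' is exactly the paper's main new contribution, and your heuristic for it would not close the argument. Because the scheme is the non-constructive Baire-category one, there are no explicit high-frequency plane waves in an inductive step whose oscillation rate you control: the building blocks $V_i$ come from the constant-coefficient coercivity proposition of \cite{CVY}, and the only quantitative information available is an $L^1$ lower bound. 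The paper's Lemma \ref{L1-coercivity} (Step 2) therefore proceeds differently: it localizes to cubes $Q$ so small that $V_0/\rho_0$ deviates from its average by at most $\varepsilon$, uses $\int_Q V_i=0$ to rewrite the cross term as $\int_Q\bigl(\tfrac{V_0}{\rho_0}-\tfrac{\overline{V_0}}{\overline{\rho_0}}\bigr)\cdot V_i$, and then absorbs the resulting linear quantity $\varepsilon\|V_i\|_{L^1(Q)}$ into $\tfrac18\int_Q|V_i|^2/\rho_0$ \emph{only because} the coercivity estimate supplies the lower bound $\|V_i\|_{L^2(Q)}\ge c_1\sqrt{\underline\rho}\,(\trace\overline{M_Q})\,|Q|^{1/2}$ with $\trace\overline{M_Q}$ bounded below via $\lambda_\ast$; the smallness of the cross term is a consequence of the perturbation being \emph{large}, not of oscillation against a fixed smooth field. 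Moreover, membership in $X_0$ requires the constraint for the sum $\widetilde V+V_i$, not for $V_i$ alone, and the interaction term is handled by a sign selection ($V_i\mapsto-V_i$ when $\int_Q V_0\cdot V_i/\rho_0<0$), which your sketch does not address. Without these two ingredients the claim that ``the perturbation property survives the restriction to $X_0$'' is unsubstantiated, and with it falls the residual-set argument and hence the uniform bound \eqref{eq unif} on which the strong convergence rests. (Once Lemma \ref{L1-coercivity} is in place, extracting \eqref{eq unif} is indeed routine, essentially as you indicate: saturation gives $\trace\widehat M=0$, i.e.\ $\trace R_0=|\widehat V|^2/\rho_0+2V_0\cdot\widehat V/\rho_0$, and \eqref{eq new constr} absorbs the cross term, as in Proposition \ref{proposition points}.)

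Two smaller points in Stage 2: on the torus the corrector solving $\Dv\m_\delta=-\partial_t\rho_\delta$ exists only if $\partial_t\rho_\delta$ has zero spatial mean, which is why the paper introduces the time-dependent normalization constant $K_\ast(\delta,t)$ in \eqref{eq varrho}; your ansatz needs an analogous normalization of $\pi^\eps$. Also, the continuity equation determines the momentum corrector, not $\sigma_\delta$: the paper fixes $\rho_\delta=(\delta^2\pi^\eps+K_\ast)^{1/\gamma}$ so that $\nabla(\rho_\delta^\gamma/\delta^2)=\nabla\pi^\eps$ holds exactly, whereas your Taylor-expansion route produces additional $O(\delta^2)$ pressure errors (harmless, but to be put into $\tilde R_\delta$). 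These are fixable details; the substantive gap is the constrained coercivity step above.
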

\begin{Remark}[On the adiabatic exponent $\gamma$]
The convex integration construction in Theorem \ref{main result} is valid for all $\gamma > 1$. This is in contrast to the result in \cite[Theorem 1.1]{CVY}, where the constraint $1 < \gamma \le 1 + \tfrac2n$ arises from the requirement that the constructed solutions satisfy the energy inequality. Since the present work does not impose the energy condition, our result holds in the full range $\gamma > 1$. 
\end{Remark}

This theorem establishes a rigorous density result: the set of incompressible solutions attainable as low-Mach limits of convex integration solutions is the \emph{entire} $L^2$-space. It affirms that any large-scale, divergence-free flow field can emerge as the effective, averaged description of a compressible fluid in the low Mach number regime, irrespective of the fine-scale, non-unique oscillatory dynamics modeled by convex integration. This provides a mathematical justification for the robustness of the incompressible approximation, confirming the physical intuition formalized in \cite{KM2, Majda, MaS}.

\subsection*{Outline of the Paper}
The rest of the paper is structured as follows. In Section \ref{sec sub}, we detail the construction of subsolutions, from the regularization of the incompressible solution (Lemma \ref{incom smooth}) to its lifting to a compressible subsolution for \eqref{scalling system 111} (Lemma \ref{lem subsoln}). Section \ref{sec ci} is dedicated to the convex integration scheme. We present the core discretization and $L^1$-coercivity argument under the new constraint \eqref{eq new constr} (Lemma \ref{L1-coercivity}), prove the uniform estimate \eqref{eq unif} (Proposition \ref{proposition points}), and establish the existence of infinitely many solutions via a Bair\'e category argument (Lemma \ref{lem conv int}). Finally, in Section \ref{sec pf}, we put these elements together to prove Theorem \ref{main result}, confirming the convergence in the low Mach number limit.

\bigskip

\section{Construction of subsolutions to the compressible Euler equations}\label{sec sub}
In this section, we construct a subsolution to the scaled compressible Euler equations \eqref{scalling system 111}, starting from an arbitrary weak solution of the incompressible Euler equations \eqref{incom Euler}.
This subsolution serves as the foundation for applying the convex integration framework, which in turn yields infinitely many weak solutions to \eqref{scalling system 111}. For clarity, we divide the construction into two lemmas.
\smallskip

The first lemma is stated below:
\begin{Lemma}
\label{incom smooth} 
For any weak solutions $\u$ to the incompressible Euler equations  \eqref{incom Euler} and any $\eps > 0$,
there exist $\u^{\varepsilon}, \pi^\varepsilon \in C^{\infty}([0,T-\eps] \times \mathbb{T}^n)$ and $R^\varepsilon \in C^\infty([0, T - \varepsilon] \times \mathbb{T}^n; \mathbb{S}^{n \times n})$, where $\mathbb{S}^{n \times n}$ is the set of symmetric matrices in dimension $n$,  such that
\begin{itemize}
\item
$(\u^{\varepsilon}, \pi^{\varepsilon}, R^\varepsilon)$ is a weak solution to 
\begin{equation}
\label{smooth euler}
\left\{
\begin{split}
& \partial_t \u^{\varepsilon} + \u^{\varepsilon}\cdot\nabla \u^{\varepsilon}+\nabla \pi^{\varepsilon}+\Dv R^{\varepsilon}=0,
\\& \Dv {\u^{\varepsilon}}=0.
\end{split}
\right.
\end{equation}
\item $R^{\varepsilon}>\frac{\varepsilon}{4} \id_n$ is a symmetric matrix, where $\id_n$ is the $n \times n$ identity matrix.
\item There exists a constant $C>0$ independent of $\eps$ such that
\begin{equation}\label{eq reg est}
\|\u^{\varepsilon}-\u\|_{L^2([0,T - \eps] \times \mathbb{T}^n)}+\|R^{\varepsilon}\|_{L^1([0,T - \eps] \times \mathbb{T}^n)}\leq C \varepsilon.
\end{equation}
\end{itemize}
\end{Lemma}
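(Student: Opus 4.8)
The plan is to produce $(\u^\eps,\pi^\eps,R^\eps)$ by mollification of $\u$ in space and time, and then to absorb the resulting commutator terms into the Reynolds stress $R^\eps$, shifting it by $\tfrac{\eps}{2}\id_n$ to guarantee positive definiteness. Concretely, fix a standard space-time mollifier $\psi_\eta$ and set $\u^\eps := \u * \psi_{\eta(\eps)}$ for a suitably small mollification parameter $\eta(\eps)$ to be chosen. Since $\Dv\u=0$ in the sense of distributions and convolution commutes with differentiation, $\Dv\u^\eps=0$, so the second equation in \eqref{smooth euler} holds exactly. For the first equation, mollify the weak momentum balance: writing $\u\otimes\u$ for the (distributionally well-defined, since $\u\in L^\infty_tL^2_x$ gives $\u\otimes\u\in L^\infty_tL^1_x$) nonlinear term, I would define the pressure $\pi^\eps$ via the Leray projector so that
\begin{equation*}
\partial_t\u^\eps + \Dv\big((\u\otimes\u)*\psi_{\eta}\big) + \nabla\pi^\eps = 0
\end{equation*}
holds (this is just the mollified incompressible Euler system, which is an identity of smooth functions), and then set
\begin{equation*}
R^\eps := (\u\otimes\u)*\psi_\eta - \u^\eps\otimes\u^\eps + \frac{\eps}{2}\,\id_n.
\end{equation*}
With this choice $(\u^\eps,\pi^\eps,R^\eps)$ solves \eqref{smooth euler} by construction, and $R^\eps$ is smooth and symmetric.

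The two quantitative points then reduce to mollification estimates. For \eqref{eq reg est}, standard properties of mollifiers give $\|\u^\eps-\u\|_{L^2([0,T-\eps]\times\T^n)}\to 0$ as $\eta\to 0$ (here one uses $\u\in L^\infty_tL^2_x \subset L^2_{t,x}$ on the compact time interval, and that the time-translates needed for the mollification stay inside $[0,T]$, which is why the estimates are on $[0,T-\eps]$); similarly the \emph{commutator} $\|(\u\otimes\u)*\psi_\eta - \u^\eps\otimes\u^\eps\|_{L^1}\to 0$ as $\eta\to 0$ by the classical strong-convergence argument for products of mollified $L^2$ functions. The additive term contributes $\tfrac{\eps}{2}|\T^n|(T-\eps)\lesssim\eps$ to $\|R^\eps\|_{L^1}$. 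Hence, choosing $\eta=\eta(\eps)$ small enough that both mollification errors are below $\tfrac{\eps}{2}$ (and below $C\eps$ for the stated bound, with $C$ depending only on $\|\u\|_{L^\infty_tL^2_x}$, $T$, $n$), inequality \eqref{eq reg est} follows.

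For the positivity $R^\eps > \tfrac{\eps}{4}\id_n$, the point is that the commutator matrix $(\u\otimes\u)*\psi_\eta - \u^\eps\otimes\u^\eps$ is itself positive semidefinite \emph{up to a small error}: by Jensen's inequality applied to the convex map $\v\mapsto\v\otimes\v$, the average $(\u\otimes\u)*\psi_\eta$ dominates $(\u*\psi_\eta)\otimes(\u*\psi_\eta)=\u^\eps\otimes\u^\eps$ in the sense of quadratic forms, so the commutator is $\geq 0$ pointwise; adding $\tfrac{\eps}{2}\id_n$ yields $R^\eps\geq\tfrac{\eps}{2}\id_n>\tfrac{\eps}{4}\id_n$. (If one prefers a mollifier that is not a probability density, or wants a margin, one instead notes the commutator has $C^0$-norm bounded independently of $\eps$ once $\eta$ is fixed and re-absorbs, but the Jensen route is cleanest.)

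The main obstacle I anticipate is purely bookkeeping: one must mollify in time as well as space to obtain $C^\infty$ regularity in $t$, which forces the shrinking of the time interval to $[0,T-\eps]$ and a careful check that the weak formulation \eqref{weak incomp euler eqn} — which only tests against divergence-free $\phi$ — still yields the mollified \emph{identity} with a genuine gradient pressure term; this is handled by the Leray–Helmholtz decomposition, writing the non-solenoidal part of $\Dv((\u\otimes\u)*\psi_\eta)$ as $\nabla\pi^\eps$. No sharp rates are needed — only that each error tends to $0$ as $\eta\to0$ — so the scheme closes by a single qualitative choice $\eta=\eta(\eps)$.
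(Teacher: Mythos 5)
Your proposal is correct and follows essentially the same route as the paper: mollify $\u$ in space--time, set $R^\eps := (\u\otimes\u)\ast\psi_\eta - \u^\eps\otimes\u^\eps + \tfrac{\eps}{2}\id_n$, use convexity (Jensen) for $R^\eps \ge \tfrac{\eps}{2}\id_n$, and choose $\eta(\eps)$ so the mollification errors give \eqref{eq reg est}. The only cosmetic difference is that you recover $\pi^\eps$ via the Leray--Helmholtz projection while the paper simply mollifies the pressure distribution, which amounts to the same thing.
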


\begin{proof} Let $\u$ be a weak solution to the incompressible Euler equations \eqref{incom Euler}. Let $\eta : \R \times \T^n \to \R$ be a standard mollifier supported on $[-1,0] \times \T^n$. For any $\alpha > 0$ we define
\[
\eta_\alpha(t,x) := \frac{1}{\alpha^4}\eta\left( \frac{t}{\alpha}, \frac{x}{\alpha} \right),
\]
and let $\u_\alpha : [0, T - \alpha] \times \T^n$ be defined by $\u_\alpha = \eta_\alpha \ast \u$. Obviously 
\[
\u_\alpha \to \u \qquad \text{in } \ L^2([0,T-\alpha] \times\mathbb {T}^3) \ \text{ as } \ \alpha \to 0.
\]
Therefore, for every $\varepsilon>0$, there exists $\alpha_\varepsilon>0$ small enough such that 
 the mollified velocity field defined as 
 $$
 \u^{\varepsilon} := \u*\eta_{\alpha_\varepsilon}(x)
 $$
 satisfies \eqref{smooth euler} on $[0, T - \eps]$, with
 \[
 \pi^\varepsilon := \pi \ast \eta_{\alpha_\varepsilon}, \qquad R^{\varepsilon} :=(\u\otimes\u)^{\varepsilon}-\u^{\varepsilon}\otimes \u^{\varepsilon}+\frac{\varepsilon}{2} \id_n,
 \]
where $(\u\otimes\u)^{\varepsilon} : = (\u\otimes\u) \ast \eta_{\alpha_\varepsilon}$.
 
Clearly by the convexity of the nonlinearity we know that the Reynolds-type stress tensor $ R^{\varepsilon}$ is a symmetric and positive definite matrix with 
\[
R^{\varepsilon} \ge \frac{\varepsilon}{2} \id_n > \frac{\varepsilon}{4} \id_n.
\]
Moreover we have
 $$
 \|\u^{\varepsilon}-\u\|_{L^2([0,T - \eps] \times \mathbb{T}^n)}\leq C \varepsilon,\qquad 
 \|(\u\otimes \u)^{\varepsilon}-\u^\eps \otimes \u^\eps\|_{L^1([0,T -\eps] \times \mathbb{T}^n)} \leq C \varepsilon,
 $$
 and 
$$
\|R^{\varepsilon}\|_{L^1([0,T - \eps] \times \mathbb{T}^n)}\leq C \varepsilon,
$$
which completes the proof of the lemma. 
\end{proof}

\smallskip

Once this lemma is established, we are ready to construct a subsolution to the compressible Euler equations \eqref{scalling system 111}. For $\varepsilon, \delta, K > 0$, define
\begin{subequations}\label{eq varrho}
\begin{equation}\label{eq def varrho}
\varrho_{\delta,K}(t,x) := \frac{(\delta^2 \pi^\varepsilon + K)^{1/\gamma} - 1}{\delta^2},
\end{equation}
where $\pi^\varepsilon$ is from Lemma \ref{incom smooth}. Since $\pi^\varepsilon \in C^\infty([0, T-\varepsilon] \times \T^n)$, we know that for any $\delta > 0$ there exists a sufficiently large $K > 0$ such that $\delta^2 \pi^\varepsilon + K > 0$, and hence $\varrho_{\delta, K} \in C^\infty([0, T-\varepsilon] \times \T^n)$. Considering $\delta$ sufficiently small such that $\| \delta^2 \pi^\eps \|_{L^\infty} \ll1$, among those choices of $K$, a simple intermediate value theorem further implies that for any $t \in [0, T-\varepsilon]$ there exists some $K_\ast = K_\ast(\delta, t) > 0$ such that
\begin{equation}\label{eq ave varrho}
\left\langle \varrho_{\delta, K_\ast} \right\rangle := \fint_{\T^n} \varrho_{\delta, K_\ast} \,dx = 0. 
\end{equation}
\end{subequations}
\begin{Lemma}\label{lem subsoln}
\label{subsolution}
For any $\varepsilon>0$, consider $(\u^\varepsilon, \pi^\varepsilon, R^\varepsilon)$ as in Lemma \ref{incom smooth}. There exists a $\delta_0 = \delta_0(\eps) >0$ such that for any $\delta\leq \delta_0$, there exist $\v_\delta \in C^{\infty}([0,T-\eps]\times \mathbb{T}^n)$ and $\tilde{R}_{\delta} \in C^\infty([0, T - \varepsilon] \times \mathbb{T}^n; \mathbb{S}^{n \times n})$ with $\tilde{R}_{\delta} > 0$, solving 
\begin{equation}\label{eq subsoln}
\left\{
\begin{split}
&\partial_t\rho_{\delta}+\Dv(\rho_{\delta} \v_{\delta})=0,
\\&\partial_t(\rho_{\delta} \v_{\delta})+\Dv(\rho_{\delta} \v_{\delta}\otimes \v_{\delta})+\nabla\frac{\rho_{\delta}^\gamma}{\delta^2}+\Dv(R^{\varepsilon}+\tilde{R}_{\delta})=0,
\end{split}\right.
\end{equation}
where 
\begin{equation}\label{eq def rho}
\rho_\delta(t,x) := 1 + \delta^2 \varrho_{\delta, K_\ast}(t,x) 
\end{equation}
with $\varrho_{\delta, K_\ast}$ being given in \eqref{eq varrho}. Moreover we have
\begin{equation}\label{eq subsoln est}
\begin{split}
\|\rho_{\delta}-1\|_{C^0([0,T-\varepsilon] \times\mathbb{T}^n)} + \|\rho_{\delta}\v_{\delta}-\u^{\varepsilon}\|_{L^2([0,T-\varepsilon] \times \mathbb{T}^n)} + \|\tilde{R_{\delta}}\|_{L^\infty([0,T-\varepsilon] \times \mathbb{T}^n)}\leq \varepsilon.
\end{split}
\end{equation}
\end{Lemma}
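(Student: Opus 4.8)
The plan is to build the subsolution around the algebraic identity $\rho_\delta^\gamma = \delta^2\pi^\varepsilon + K_\ast$, which follows directly from \eqref{eq def varrho} and \eqref{eq def rho}; since $K_\ast = K_\ast(\delta,t)$ is independent of $x$, this gives $\nabla\!\big(\rho_\delta^\gamma/\delta^2\big) = \nabla\pi^\varepsilon$, so that the compressible pressure force matches the incompressible one from Lemma \ref{incom smooth}. First I would upgrade the pointwise choice of $K_\ast$ sketched around \eqref{eq ave varrho} to a quantitative statement. Rewriting \eqref{eq ave varrho} as $\fint_{\T^n}(\delta^2\pi^\varepsilon(t,\cdot) + K)^{1/\gamma}\,dx = 1$ and noting that the $K$-derivative of the left-hand side equals $\tfrac1\gamma\fint_{\T^n}(\delta^2\pi^\varepsilon + K)^{1/\gamma - 1}\,dx > 0$, the implicit function theorem (applied near $\delta = 0$, $K = 1$, where the left-hand side is $1$) yields a unique solution $K_\ast(\delta,t)$, smooth in $(\delta,t)$, with $K_\ast(0,t) \equiv 1$ and $\partial_\delta K_\ast(0,t) \equiv 0$. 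Hence $\partial_t^j(K_\ast - 1) = O(\delta^2)$ uniformly on $[0,T-\varepsilon]$ for every $j$, and consequently, after shrinking $\delta_0 = \delta_0(\varepsilon)$, one has $\delta^2\pi^\varepsilon + K_\ast \ge \tfrac12$, $\rho_\delta \in C^\infty([0,T-\varepsilon]\times\T^n)$ with $\rho_\delta \ge \tfrac12$, and $\|\rho_\delta - 1\|_{C^k([0,T-\varepsilon]\times\T^n)} \le C_k(\varepsilon)\delta^2$ for every $k$.

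Next I would determine $\v_\delta$ from the continuity equation. Since $\fint_{\T^n}\rho_\delta\,dx = 1$ for all $t$, we have $\fint_{\T^n}\partial_t\rho_\delta\,dx = 0$, so $w_\delta := -\nabla\Delta^{-1}\partial_t\rho_\delta$ is a smooth, mean-zero vector field solving $\Dv w_\delta = -\partial_t\rho_\delta$ with $\|w_\delta\|_{C^k} \le C_k(\varepsilon)\delta^2$. I then set $\rho_\delta\v_\delta := \u^\varepsilon + w_\delta$, i.e. $\v_\delta := (\u^\varepsilon + w_\delta)/\rho_\delta \in C^\infty$ (using $\rho_\delta \ge \tfrac12$); the first equation of \eqref{eq subsoln} holds at once because $\Dv\u^\varepsilon = 0$. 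Substituting $\rho_\delta\v_\delta = \u^\varepsilon + w_\delta$ into the momentum equation, subtracting the relation \eqref{smooth euler} for $\u^\varepsilon$ (written as $\partial_t\u^\varepsilon + \Dv(\u^\varepsilon\otimes\u^\varepsilon) + \nabla\pi^\varepsilon + \Dv R^\varepsilon = 0$, valid since $\Dv\u^\varepsilon = 0$), and invoking $\nabla(\rho_\delta^\gamma/\delta^2) = \nabla\pi^\varepsilon$, the second equation of \eqref{eq subsoln} reduces to $\Dv\tilde{R}_\delta = F_\delta$, where
\[
F_\delta := -\partial_t w_\delta - \Dv E_\delta, \qquad E_\delta := \frac{(\u^\varepsilon + w_\delta)\otimes(\u^\varepsilon + w_\delta)}{\rho_\delta} - \u^\varepsilon\otimes\u^\varepsilon.
\]
Expanding $\rho_\delta^{-1} = 1 + O(\delta^2)$ and using $\|w_\delta\|_{C^k} = O(\delta^2)$ gives $\|E_\delta\|_{C^k} + \|\partial_t w_\delta\|_{C^k} \le C_k(\varepsilon)\delta^2$, and $\fint_{\T^n}F_\delta\,dx = 0$ since $w_\delta$ is mean-zero.

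To obtain a \emph{positive definite} $\tilde{R}_\delta$ I would apply the standard symmetric inverse-divergence operator $\mathcal R$ from the convex integration literature (see \cite{CVY}), which maps mean-zero fields to symmetric trace-free matrix fields with $\Dv\mathcal R f = f$ and obeys Schauder-type bounds; thus $\mathcal R F_\delta$ is symmetric with $\|\mathcal R F_\delta\|_{C^0} \le C(\varepsilon)\delta^2$. Since a trace-free matrix cannot be positive definite, I correct by a constant multiple of the identity and set $\tilde{R}_\delta := \mathcal R F_\delta + 2C(\varepsilon)\delta^2\,\id_n$: the added term is constant in $(t,x)$, so $\Dv\tilde{R}_\delta = F_\delta$ is unchanged, $\tilde{R}_\delta$ is symmetric, and $\tilde{R}_\delta \ge \big(2C(\varepsilon)\delta^2 - \|\mathcal R F_\delta\|_{C^0}\big)\id_n > 0$. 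This produces the relaxed system \eqref{eq subsoln}. The estimates \eqref{eq subsoln est} are then immediate: $\|\rho_\delta - 1\|_{C^0} \le C(\varepsilon)\delta^2$, $\|\rho_\delta\v_\delta - \u^\varepsilon\|_{L^2} = \|w_\delta\|_{L^2} \le C(\varepsilon)\delta^2$, and $\|\tilde{R}_\delta\|_{L^\infty} \le 3C(\varepsilon)\delta^2$, so it suffices to shrink $\delta_0 = \delta_0(\varepsilon)$ further so that $3C(\varepsilon)\delta_0^2 \le \varepsilon$.

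I expect the main obstacle to be the first step: rigorously justifying that $K_\ast(\delta,t)$ is smooth in $t$ with all $t$-derivatives of $K_\ast - 1$ of order $O(\delta^2)$ uniformly on $[0,T-\varepsilon]$, since every subsequent $O(\delta^2)$ estimate --- on $\rho_\delta - 1$, on $\partial_t\rho_\delta$ and hence $w_\delta$, and on $E_\delta$ --- rests on it. All constants $C_k(\varepsilon)$ depend on $\varepsilon$ only through the fixed smooth fields $\u^\varepsilon,\pi^\varepsilon$, which is harmless because $\delta$ is sent to $0$ with $\varepsilon$ frozen. The remaining point, positivity of $\tilde{R}_\delta$, is handled cleanly by the constant identity shift once $\mathcal R F_\delta$ is controlled in $L^\infty$.
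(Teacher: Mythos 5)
Your proposal is correct and follows essentially the same route as the paper: the same $\rho_\delta$ built from $K_\ast$ (with the same quantitative control of $K_\ast-1$, indeed you obtain the sharper $O(\delta^2)$ rate where the paper only records $O(\delta)$), the same correction $w_\delta=\nabla(-\Delta)^{-1}\partial_t\rho_\delta$ solving the continuity equation on top of $\u^\varepsilon$, and the same constant-identity shift to render $\tilde R_\delta$ positive definite. The only cosmetic difference is in the antidivergence step: the paper does not need the general operator $\mathcal R$, since $\partial_t w_\delta$ is itself a gradient and hence the divergence of a scalar multiple of $\id_n$, while the quadratic remainder $E_\delta$ is already a symmetric matrix, and it adds $\tfrac{\varepsilon}{8}\id_n$ rather than an $O(\delta^2)$ multiple of the identity --- both choices satisfy \eqref{eq subsoln est}.
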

\begin{proof}
From \eqref{eq varrho} and \eqref{eq def rho} we have 
\[
\rho_\delta = 1 + \delta^2 \varrho_{\delta, K_\ast} = (\delta^2 \pi^\varepsilon + K_\ast)^{1/\gamma}.
\]
So for $\delta$ sufficiently small we know that $\rho_\delta > 0$, and 
\begin{equation}\label{eq prop rho}
\rho_\delta \in C^\infty([0, T-\varepsilon] \times \T^n), \quad \left\langle \rho_\delta \right\rangle = 1, \quad \text{and} \quad \|\rho_{\delta} - K_\ast^{1/\gamma}\|_{C^0([0,T-\varepsilon] \times\mathbb{T}^n)} \le C(\varepsilon) \delta^2.
\end{equation}
The time derivative of $ \rho_{\delta} $ is given by
\[
\partial_t \rho_{\delta} = \frac{1}{\gamma} \left(\delta^2 \pi^{\varepsilon} + K_\ast \right)^{\frac{1}{\gamma} - 1} (\delta^2 \partial_t \pi^{\varepsilon} + \partial_t K_\ast).
\]

It is easily seen that $K_\ast$ is smooth in $(\delta, t) \in [0, \infty) \times [0, T - \varepsilon]$, and $K_\ast(0, t) = 1$. Therefore we have for $\delta$ small enough
\[
\| K_\ast(\delta, t) - 1 \|_{C^0([0, T - \varepsilon])} \le C(\varepsilon) \delta, \qquad \| \partial_t K_\ast(\delta, t) \|_{C^0([0, T - \varepsilon])} \le C(\varepsilon) \delta.
\]
This leads to the following bound:
\begin{equation}
\label{bound of density in delta}
\|\rho_{\delta} - 1\|_{C^0([0,T-\varepsilon] \times\mathbb{T}^n)} + \| \partial_t \rho_{\delta} \|_{L^\infty([0,T-\varepsilon] \times\mathbb{T}^n)} + \| \partial_t^2 \rho_{\delta} \|_{L^\infty([0,T-\varepsilon] \times\mathbb{T}^n)} \leq C(\varepsilon) \delta.
\end{equation} 

With these bounds in place, we may now proceed with the following calculations. Define $\m_\delta$ on $[0, T - \varepsilon] \times \mathbb{T}^n$ through
\[
\partial_t \rho_\delta + \Dv \m_\delta = 0.
\]
From \eqref{eq prop rho} we see that $\left\langle \partial_t \rho_\delta \right\rangle = 0$, and hence by the well-known result of Bourgain--Brezis \cite{BB} it follows that the above equation admits a solution $\m_\delta = \nabla (-\Delta)^{-1} (\partial_t \rho_\delta)$, with the estimate 
\begin{equation}\label{eq est m}
\|\m_\delta\|_{L^\infty([0,T-\varepsilon] \times\mathbb{T}^n)} \leq C \| \partial_t \rho_{\delta} \|_{L^\infty(0,T-\varepsilon; L^n(\mathbb{T}^n))} \le C(\varepsilon) \delta.
\end{equation}
Here $(-\Delta)^{-1}$ is taken to to be an operator that sends mean-zero $L^2$ functions to mean-zero $H^2$ functions. 
From this, and the fact that $\rho_\delta > 0$, we define
\[
\v_\delta := \frac{\u^\epsilon + \m_\delta}{\rho_\delta}, \qquad \text{i.e.} \qquad \rho_\delta \v_\delta = \u^\epsilon + \m_\delta.
\]
From \eqref{eq est m} it follows that
\begin{equation}
\label{m close}
\|\rho_{\delta}\v_{\delta}-\u^{\varepsilon}\|_{L^2([0,T-\varepsilon] \times\mathbb{T}^n)}=\| \m_{\delta}\|_{L^2([0,T-\varepsilon] \times\mathbb{T}^n)} \leq C(\eps) \delta.
\end{equation}

Finally we find $\tilde{R}_{\delta}$. For this, we calculate the following equation
\begin{equation*}
\begin{split}
&\partial_t(\u^{\varepsilon} + \m_{\delta}) + \Dv \left( \frac{(\u^{\varepsilon}+ \m_{\delta})\otimes (\u^{\varepsilon}+ \m_{\delta})} {\rho_{\delta}} \right)+\nabla\frac{\rho_{\delta}^{\gamma}}{\delta^2}+\Dv R^{\varepsilon}
\\&=\partial_t \m_{\delta}+ \Dv \left( \frac{(\u^{\varepsilon} + \m_{\delta})\otimes (\u^{\varepsilon}+ \m_{\delta})} {\rho_{\delta}}-\u^{\varepsilon}\otimes\u^{\varepsilon} \right)
\\&= \Dv \left( (-\Delta)^{-1}\partial^2_{t}\rho_{\delta} \id_n \right)+\Dv \left( \frac{(\u^{\varepsilon}+ \m_{\delta})\otimes (\u^{\varepsilon}+\m_{\delta})} {\rho_{\delta}}-\u^{\varepsilon}\otimes\u^{\varepsilon} \right).
\end{split}
\end{equation*}
Since $\partial_t^2 \rho_\delta(t, \cdot) \in L^2(\T^n)$ with $\left\langle \partial_t^2 \rho_\delta \right\rangle = 0$, $(-\Delta)^{-1}\partial^2_{t}\rho_{\delta}$ is well-defined. 
Thus, we denote  
$$
\tilde{R}_{\delta} := -(-\Delta)^{-1}\partial^2_{t}\rho_{\delta} \id_n - \frac{(\u^{\varepsilon}+ \m_{\delta})\otimes (\u^{\varepsilon}+\m_{\delta})} {\rho_{\delta}}+\u^{\varepsilon}\otimes\u^{\varepsilon}+\frac{\varepsilon}{8} \id_n.
$$
Then we obtain the equation
\begin{equation*}
\partial_t(\u^{\varepsilon}+ \m_{\delta}) + \Dv \left( \frac{(\u^{\varepsilon}+ \m_{\delta})\otimes (\u^{\varepsilon}+ \m_{\delta})} {\rho_{\delta}} \right)+\nabla\frac{\rho_{\delta}^{\gamma}}{\delta^2}+\Dv (R^{\varepsilon}+\tilde{R}_{\delta})=0.
\end{equation*}
From the above estimates, we obtain 
\[
\left\| \frac{(\u^{\varepsilon}+ \m_{\delta})\otimes (\u^{\varepsilon}+\m_{\delta})} {\rho_{\delta}} - \u^{\varepsilon}\otimes\u^{\varepsilon} \right\|_{L^\infty([0,T-\varepsilon] \times \mathbb{T}^n))} \le C(\varepsilon) \delta.
\]
Since $\left\langle \partial_t^2 \rho_\delta \right\rangle = 0$, the $L^2$-estimate on torus for the first term in $\tilde{R}_{\delta}$ gives
\[
\left\| (-\Delta)^{-1} \partial_t^2 \rho_\delta \right\|_{L^\infty(0,T-\varepsilon; H^2(\mathbb{T}^n))} \le C \left\| \partial_t^2 \rho_\delta \right\|_{L^\infty(0,T-\varepsilon; L^2(\mathbb{T}^n))} \le C(\varepsilon) \delta,
\]
which, by Sobolev embedding (since here $n = 2$ or $3$), yields
\[
\left\| (-\Delta)^{-1} \partial_t^2 \rho_\delta \right\|_{L^\infty([0,T-\varepsilon] \times \mathbb{T}^n)} \le C(\varepsilon) \delta.
\]
Therefore
\begin{equation}\label{R close}
\| \tilde{R}_{\delta} \|_{L^\infty([0,T-\varepsilon] \times \mathbb{T}^n))} \leq C(\varepsilon) \delta.
\end{equation}

Choosing $\delta$ sufficiently small (depending on $\eps$) it follows that $\tilde{R}_{\delta} > 0$, and \eqref{eq subsoln est} follows from \eqref{bound of density in delta}, \eqref{m close} and \eqref{R close}, and hence we complete the proof of the lemma. 
\end{proof}

We call the solution $(\rho_\delta, V_\delta := \rho_\delta \v_\delta, R_\delta := R^\varepsilon + \tilde{R}_\delta)$ to \eqref{eq subsoln} a subsolution to the isentropic compressible Euler system \eqref{scalling system 111}.  

\bigskip

\section{Convex integration}\label{sec ci}

We begin with a (smooth) subsolution $(\rho_0, V_0, R_0)$ to the isentropic compressible Euler system, as constructed in Lemma \ref{lem subsoln}, which solves the following system:
\begin{equation}
\left\{
\begin{split}
\label{isentropic equivalent}
&\partial_t\rho + \Dv V = 0, \\
&\partial_t V + \Dv U + \nabla\left(p_\delta(\rho) + \frac{|V|^2}{n\rho}\right) + \Dv R = 0,
\end{split}
\right.
\end{equation}
where $ U := \frac{V \otimes V}{\rho} - \frac{|V|^2}{n\rho} \id_n $, and $ R > 0 $ is a symmetric matrix-valued function representing the Reynolds stress. Here the pressure $p_\delta(\rho) := \rho^\gamma/\delta^2$. Our objective is to construct infinitely many bounded perturbations $ (\widetilde{V}, \widetilde{U}) $, supported in a prescribed domain $ P \subset (0,T) \times \mathbb{T}^n $, that satisfy the following linear system:
\begin{equation}
\label{ci system}
 \left\{
\begin{split}
&\Dv \widetilde{V}=0,\\
&\partial_t\widetilde{V}+\Dv \widetilde{U}=0,
\end{split}\right.
\end{equation} 
together with the nonlinear algebraic constraint:
\begin{equation}
\label{restriction =}
\frac{(V_0 + \widetilde{V}) \otimes (V_0 + \widetilde{V})}{\rho_0} - (U_0 + \widetilde{U}) = \frac{|V_0|^2}{n\rho_0} \id_n + R_0 \quad \text{a.e. in } P,
\end{equation}
where
\begin{equation}
\label{U0 condition}
U_0 := \frac{V_0 \otimes V_0}{\rho_0} - \frac{|V_0|^2}{n\rho_0} \id_n.
\end{equation}

This construction is made possible by the convex integration framework. The existence of such perturbations implies that $(\rho_0, V_0 + \widetilde{V})$ satisfies 
\[
\left\{\begin{split}
& \partial_t \rho_0 + \Dv (V_0 + \widetilde{V}) = 0, \\
& \partial_t (V_0 + \widetilde{V}) + \Dv \left( \frac{(V_0 + \widetilde{V}) \otimes (V_0 + \widetilde{V})}{\rho_0} \right) + \nabla p_\delta(\rho_0) = 0.
\end{split}\right.
\]
Setting $\widehat{\v}_\delta : = \tfrac{V_0 + \widetilde{V}}{\rho_0}$ leads to the weak solution $(\rho_0, \widehat{\v}_\delta)$ 
to the compressible Euler system \eqref{scalling system 111}.

In our application, $(\rho_0, V_0, R_0)$ will be chosen from the subsolutions as in Lemma \ref{lem subsoln}. In particular, $\rho_0 = \rho_\delta$, $V_0 = \rho_\delta \v_\delta$, $R_0 = R^{\varepsilon}+\tilde{R}_{\delta}$, and we know that $\rho_0 > 0$. Therefore, on $P$ we impose the uniform condition
\begin{equation}
 \label{bounds rho}
 0<\frac{1}{\Lambda^2}\leq \rho_0\leq \Lambda^2.
 \end{equation}
Note that the system \eqref{ci system}--\eqref{U0 condition} does not involve the pressure function $p_\delta$, and therefore all of the relevant estimates in this section are uniform in $\delta$. Abusing notations, we will still use $\varepsilon$ and $\delta$ in the following, but they are independent of the previous section. 

In contrast to the approach in \cite{CVY}, our construction of infinitely many solutions $ (\widetilde{V}, \widetilde{U}) $ to the system \eqref{ci system}--\eqref{restriction =} includes a new and essential feature. Specifically, we will ask that the following  inequality holds:
\begin{equation}
\label{additional control}
\left|\int_0^T\int_{\mathbb{T}^3}\frac{V_0\cdot\widetilde{V}}{\rho_0}\,dx\,dt\right|\leq \frac{1}{8}\int_0^T\int_{\mathbb{T}^3} \frac{|\widetilde{V}|^2}{\rho_0}\,dx\,dt.
\end{equation}
This inequality is not present in \cite{CVY}, and it constitutes a new estimate of the convex integration framework. It will allow one to control the turbulent part $\widetilde V$ by the Reynolds tensor $R_0$ (see Proposition \ref{proposition points}).

\medskip

Following \cite{CDK,CVY,DS}, we will consider the relaxed condition
\begin{equation}
\label{ci constraint 2}
\frac{(V_0 + \widetilde{V}) \otimes (V_0 + \widetilde{V})} { \rho} - (U_0 + \widetilde U) < {|V_0|^2 \over n\rho_0} \id_n + R_0,
\end{equation}
and define the set 
\begin{equation}\label{defn X_0}
X_0 := \left\{ (\widetilde V, \widetilde U) \in C^\infty_c(P; \R^n \times \mathbb{S}^{n\times n}_0): \ (\widetilde V, \widetilde U) \text{ solves } \eqref{ci system},   \eqref{additional control} \text{ and } \eqref{ci constraint 2} \right\},
\end{equation}
where $\mathbb{S}^{n \times n}_0$ is the set of symmetric traceless matrices in dimension $n$. 

It is clear that the set $ X_0 $ is nonempty, since $ 0 \in X_0 $ due to the condition \eqref{U0 condition} and the assumption $ R_0 > 0 $. We then define $ X $ to be the closure of $ X_0 $ in the weak-$\ast$ topology of $ L^\infty $. The boundedness of $ X $ in $ L^\infty $ ensures that this weak-$\ast$ topology is metrizable on $ X $ with a well-defined metric $d$, which in turn implies that $ (X, d) $ is a complete metric space.

The main idea is similar to that of \cite{CVY}. That is, we would like to show that the saturation condition \eqref{restriction =} holds on a residual set of $X$, and thus a Bair\'e category argument can be applied to yield infinitely many solutions to \eqref{ci system}--\eqref{restriction =}. However, here the difference lies, as pointed out above, in the extra condition \eqref{additional control}. The key step in \cite{CVY} is to establish an $L^1$-coercivity property for the weak-$\ast$ convergence in terms of the trace of the ``defect matrix''
$$
M := {|V_0|^2 \over n\rho_0} \id_n + R_0-\frac{(V_0 + \widetilde{V}) \otimes (V_0 + \widetilde{V})} { \rho_0} + (U_0 + \widetilde U)
$$
with 
\[
\lambda_{\min}(M) := \text{ the smallest eigenvalue of } M.
\]
The goal is to reproduce such an $L^1$-coercivity, with the additional constraint \eqref{additional control}. 

\begin{Lemma}[$L^1$-coercivity]
\label{L1-coercivity}
For any $(\widetilde{V}, \widetilde{U}) \in X_0$, where $X_0$ is defined in \eqref{defn X_0}, there exists a sequence $\{(\widetilde{V}_i, \widetilde{U}_i)\} \subset X_0$ that converges weakly-$*$ to $(\widetilde{V}, \widetilde{U})$, such that
$$
\|\widetilde{V}_i - \widetilde{V}\|_{L^1(P)} \geq \frac{c_0}{\Lambda} \int_{P} \trace \, M \, dx \, dt,
$$
for some positive geometric constant $c_0$, where $\Lambda$ is given in \eqref{bounds rho}.
\end{Lemma}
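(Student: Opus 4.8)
The plan is to adapt the classical perturbation lemma of De Lellis--Székelyhidi (as used in \cite{CDK,CVY,DS}), which produces highly oscillatory localized plane-wave perturbations along suitable wave cone directions, and to check that the new constraint \eqref{additional control} survives the construction. First I would recall the pointwise structure: since $R_0 > 0$ on $\overline{P}$ and $\trace M$ is continuous, the gap $\lambda_{\min}(M)$ is bounded below by a positive constant on any compact subset where $\trace M > 0$; away from that, the estimate is vacuous. For a fixed $(\widetilde V, \widetilde U) \in X_0$, one covers $P$ (up to a small-measure remainder) by small cubes on which $(\rho_0, V_0 + \widetilde V, U_0 + \widetilde U)$ and $M$ are nearly constant, and on each cube adds a perturbation of the form $(\delta \widetilde V_j, \delta \widetilde U_j)$ built from a single plane wave $a \chi(x) \cos(N j \cdot (x,t)) $ oscillating in a direction $(j,\sigma)$ lying in the wave cone associated to the operator in \eqref{ci system}, with amplitude $a$ chosen so that the new triple still satisfies the strict inequality \eqref{ci constraint 2} and so that the $L^1$ mass of the added velocity perturbation is comparable to $\lambda_{\min}(M)^{1/2}$ times the cube's measure — this is exactly the standard computation giving $\|\delta\widetilde V\|_{L^1} \gtrsim \Lambda^{-1}\int \trace M$. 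Summing over cubes, letting the cube size go to zero and the oscillation frequency $N \to \infty$ produces the desired weakly-$\ast$ convergent sequence $(\widetilde V_i, \widetilde U_i) \rightharpoonup (\widetilde V, \widetilde U)$ in $X_0$ with the stated lower bound on $\|\widetilde V_i - \widetilde V\|_{L^1(P)}$.

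The genuinely new point, and the one I expect to be the main obstacle, is verifying that each perturbed element $(\widetilde V + \delta \widetilde V_i, \widetilde U + \delta \widetilde U_i)$ still lies in $X_0$, i.e.\ that the linear constraint \eqref{additional control} is preserved. The key observation is that \eqref{additional control} is a strict inequality (with the factor $1/8$) at the \emph{interior} point $(\widetilde V, \widetilde U)$ unless $\widetilde V = 0$; in the generic case there is room, and since $\delta \widetilde V_i \to 0$ strongly in $L^2(P)$ (the oscillatory perturbations have $L^2$-norm controlled, and more importantly $\int \frac{V_0 \cdot \delta\widetilde V_i}{\rho_0} \to 0$ by weak convergence against the fixed $L^2$ function $V_0/\rho_0$, while $\int \frac{|\delta\widetilde V_i|^2}{\rho_0}$ stays bounded), the inequality \eqref{additional control} for $\widetilde V + \delta\widetilde V_i$ follows from that for $\widetilde V$ for $i$ large, by continuity of both sides. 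The boundary case $\widetilde V = 0$ (relevant because $0 \in X_0$ and one needs the first step of the iteration) is handled because then the left side of \eqref{additional control} is $|\int \frac{V_0 \cdot \delta\widetilde V_i}{\rho_0}|$, which is $o(1)$, whereas the right side is $\frac18\int\frac{|\delta\widetilde V_i|^2}{\rho_0}$, which is bounded below by a positive multiple of $\int\trace M$; so the constraint holds strictly for all large $i$. More generally one chooses the plane-wave directions so that $\int \frac{V_0\cdot\delta\widetilde V_i}{\rho_0}$ is as small as one wishes relative to $\int\frac{|\delta\widetilde V_i|^2}{\rho_0}$, which is automatic from the oscillation.

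Putting these together, the proof reduces to: (i) the standard localization and single-plane-wave estimate yielding the $L^1$ lower bound with the geometric constant $c_0/\Lambda$ — here the $\Lambda$-dependence enters through \eqref{bounds rho}, since the amplitude $a$ of the admissible plane wave scales like $\lambda_{\min}(M)^{1/2}$ but the conversion between $|V_0 + \widetilde V + a(\dots)|^2/\rho_0$ and the matrix constraint costs a factor of $\rho_0^{\pm 1}\le\Lambda^2$; (ii) the check that \eqref{ci constraint 2} remains strict, which is identical to \cite{CVY}; and (iii) the new verification that \eqref{additional control} is stable under the perturbation, as sketched above. I would organize the write-up so that steps (i)--(ii) are quoted essentially verbatim from \cite{CVY} (or \cite{DS}), and devote the detailed argument to (iii). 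The main technical care is bookkeeping the two small parameters — cube size $\to 0$ and frequency $N \to \infty$ — in the right order, so that the total $L^1$ mass added is not lost in the remainder set, exactly as in the classical argument.
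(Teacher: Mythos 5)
Your overall architecture---localization into small cubes, the constant-coefficient building block of \cite[Proposition 2.1]{CVY}, and a separate verification that the new constraint \eqref{additional control} survives the perturbation---is exactly the paper's, and your steps (i)--(ii) are handled there essentially as you propose (the paper quotes the constant-coefficient proposition rather than redoing the plane-wave computation, and the factor $\Lambda^{-1}$ enters through the rescaling $t\mapsto t/\sqrt{\underline\rho}$). The flaw is in step (iii), which is the whole point of the lemma. You assert that \eqref{additional control} ``is a strict inequality at the interior point $(\widetilde V,\widetilde U)$ unless $\widetilde V=0$'' and deduce stability of the constraint from this ``room'' by continuity. Membership in $X_0$ only imposes the non-strict inequality, so an element of $X_0$ may achieve equality in \eqref{additional control} with $\widetilde V\neq 0$, and then the continuity argument gives nothing. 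You also write that the perturbations converge to $0$ strongly in $L^2(P)$; this is false and in fact contradicts the coercivity you are proving, since the $L^1$ lower bound keeps $\|V_i\|_{L^2(P)}$ bounded away from zero uniformly in $i$.

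The mechanism you invoke only for the boundary case $\widetilde V=0$ is the correct one and covers every case: expanding $\frac18\int\frac{|\widetilde V+V_i|^2}{\rho_0}=\frac18\int\frac{|\widetilde V|^2}{\rho_0}+\frac14\int\frac{\widetilde V\cdot V_i}{\rho_0}+\frac18\int\frac{|V_i|^2}{\rho_0}$, the cross terms $\int\frac{V_0\cdot V_i}{\rho_0}$ and $\int\frac{\widetilde V\cdot V_i}{\rho_0}$ are $o(1)$ by weak convergence against the fixed $L^2$ functions $V_0/\rho_0$ and $\widetilde V/\rho_0$, while the gain $\frac18\int\frac{|V_i|^2}{\rho_0}\ge\frac{1}{8\Lambda^2|P|}\|V_i\|_{L^1(P)}^2$ is bounded below by a positive constant independent of $i$ precisely because of the $L^1$ lower bound and $\trace M>0$ on $P$. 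Hence \eqref{additional control} holds for $\widetilde V+V_i$ for all large $i$, and a tail of the sequence suffices for the lemma as stated. You should rebuild (iii) around this quantitative gain and delete the strictness and strong-convergence claims. For comparison, the paper secures the constraint for \emph{every} index rather than a tail: on each cube $Q$ it uses that $V_i$ has mean zero and that $V_0/\rho_0$ is nearly constant to get $\left|\int_Q\frac{V_0\cdot V_i}{\rho_0}\right|\le\varepsilon\int_Q|V_i|\le\frac18\int_Q\frac{|V_i|^2}{\rho_0}$ (a reverse Cauchy--Schwarz made possible by the $L^1$ lower bound on $Q$), and then kills the remaining cross term by replacing $V_i$ with $-V_i$ when its sign is unfavorable; this cube-by-cube version is slightly cleaner when the lemma is fed into the diagonal argument of Proposition \ref{proposition points}, but either route proves the statement.
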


\begin{proof}

The proof proceeds via discretization and is divided into three steps. The first and third steps are the same as in \cite[Lemma 3.2]{CVY}, but for the reader's convenience, we include the full details.

{\bf Step 1. }
We begin by considering a localized problem. From \eqref{ci constraint 2} we know that $M > 0$ on $P$. It follows from the smoothness of $M$ that there exist a compact subsets $\Omega_1 \subset \Omega \subset P$, and a sufficiently small $\delta_1 > 0$ such that any cube with center in $\Omega_1$ and size $\delta_1$ is inside $\Omega$, and
\begin{align*}
& \int_{\Omega_1} \trace M \,dxdt \ge \frac12 \int_{P} \trace M \,dxdt, \\
& \lambda_\ast := \min_{\Omega} \left\{ \min( \lambda_{\min}(M), \lambda_{\min}(R_0) ) \right\} > 0. 
\end{align*}

Let $ (x_0, t_0) \in \Omega_1 $ be a fixed point, and choose a small open cube $ Q $ centered at $ (x_0, t_0) $ with size is smaller than $ \delta_1 $. Denote the following averages over $ Q $:
\begin{equation}\label{averages}
\begin{split}
\overline V := \fint_Q (V_0+\widetilde V)\,dxdt, \quad \overline U := \fint_Q (U_0+\widetilde U)\,dxdt, \quad \overline R_0 :=   \fint_Q R_0\,dxdt.
\end{split}
\end{equation}

Let
\begin{equation}\label{def_C_Q}
\underline{\rho} := \min_Q \rho_0, \quad 
C_Q := \min_Q \frac{|V_0|^2}{\rho_0} \geq 0, \quad 
\overline{R_Q} := -\frac{\lambda_*}{16n} \, \id_n + \overline{R_0},
\end{equation}
and define 
\begin{equation}\label{def_M_Q}
\overline{M_Q} := \frac{C_Q}{n} \, \id_n + \overline{R_Q} - \frac{\overline{V} \otimes \overline{V}}{\underline{\rho}} + \overline{U} \in \mathbb{S}^{n \times n}.
\end{equation}

Note that the uniform continuity of $ \rho_0 $, $ V_0 $, $ R_0 $, and $ (\widetilde{V}, \widetilde{U}) $ in $ \Omega $ implies that for any $ \varepsilon > 0 $, there exists some $ \delta > 0 $ independent of $ Q $ such that whenever $ |Q| < \delta $, the fluctuation of these quantities over $ Q $ is smaller than $ \varepsilon $. In particular, by choosing $ \varepsilon $ small enough with respect to $ \lambda_* $, we can ensure that for $ \delta $ sufficiently small,
\begin{equation*}
\begin{split}
\overline{R_Q} > 0, &\quad 
\sup_Q \| M - \overline{M_Q} \| < \frac{\lambda_*}{8n}, \quad
\sup_Q \| \overline{R_0} - R_0 \| < \frac{\lambda_*}{64n}, \quad 
\sup_Q \left| C_Q - \frac{|V_0|^2}{\rho_0} \right| < \frac{\lambda_*}{64n},
\end{split}
\end{equation*}
where $ \|\cdot\| $ denotes the standard matrix norm.

Together with \eqref{def_C_Q} we obtain the following bound:
\begin{equation}\label{bound M_Q}
\frac{|V_0|^2}{n \rho_0} \id_n + \overline R_Q < \frac{\lambda_*}{32n} \id_n + R_0,
\end{equation}
and 
\begin{equation}\label{M bound}
\overline{M_Q} = M + (\overline{M_Q} - M) > M - \frac{\lambda_*}{8n} \id_n > 0, \text{ and thus} \quad \trace  \overline{M_Q} > \frac{1}{4} \trace  M \text{ on } Q.
\end{equation}
Consider the rescaled set $\mathring Q := \{ (x, t/\sqrt{\underline\rho }): \ (x, t) \in Q \}$ and define 
\[
\begin{split}
X_0^Q := & \Big\{ (V, U) \in C^\infty_c(\mathring{Q}; \R^n \times \mathbb{S}_0): \ (V, U) \text{ solves } \eqref{ci system} \text{ and }  \\
&\qquad \qquad \left. (V_0 + V) \otimes (V_0 + V) - (U_0 + U) < {C_Q \over n} \id + R_0 \right\}.
\end{split} 
\]
Thanks to \eqref{def_M_Q} and \eqref{M bound}, $(0,0) \in X_0^Q$.

Since we have presented a localization argument above, we would like to establish $L^1$-coercivity of the perturbation with respect to constant states. Such a result has already been established in \cite[Proposition 2.1]{CVY}. Therefore, quoting \cite[Proposition 2.1]{CVY} with the constants 
\[
(V_0, U_0, R_0, C_0) = \left( \frac{\overline V}{\sqrt{\underline\rho }}, \overline U, \overline R_Q, C_Q \right),
\]
it follows that there exists a sequence $(\mathring V_i,\mathring U_i)\in X^Q_0$ converging weakly to 0, and such that for every $i$:
\begin{equation*}
\|\mathring V_i\|_{L^1(\mathring Q)}\geq c_1 (C_Q+\trace   \overline R_Q-|\overline V|^2)|\mathring Q|\geq c_1 (\trace   \overline{M_Q})|\mathring Q|
\end{equation*}
for some geometric constant $c_1 > 0$. Since $(\mathring V_i, \mathring U_i)\in X^Q_0$, it verifies   \eqref{ci system} and 
$$
\left(\frac{\overline V}{\sqrt{\underline\rho }}+\mathring V_i \right)\otimes \left(\frac{\overline V}{\sqrt{\underline\rho }}+\mathring V_i \right)-\overline  U+\mathring U_i< \frac{C_Q}{n}\id_n +  \overline R_Q.
$$

Consider the change of variable $(\mathring{V_i}, \mathring{U_i})(x,t) := \left( \frac{V_i}{\sqrt{\underline{\rho}}}, U_i \right) (x, t\sqrt{\underline{\rho}})$. The functions $(V_i, U_i)$ are now compactly supported in $Q$, and they still verify \eqref{ci system} and converge weakly to 0. Unraveling the change-of-variables and using \eqref{M bound} we find that 
\begin{equation}\label{punch3}
\| V_i \|_{L^1(Q)} = \underline{\rho} \|\mathring V_i\|_{L^1(\mathring Q)} \ge c_1 \underline{\rho} (\trace   \overline{M_Q})|\mathring Q| = c_1 \sqrt{\underline{\rho}} (\trace   \overline{M_Q})|Q| \geq \frac{c_1}{4\Lambda} \int_Q \trace  \, M \, dx dt.
\end{equation}
Moreover, we have the following estimates on $Q$ 
\begin{equation}\label{condition1 of V+Vi}
\begin{split}
& \frac{(V_0+\widetilde{V}+V_i) \otimes (V_0+\widetilde{V}+V_i)}{\rho_0} - (U_0+\widetilde{U}+U_i) \\
& \quad \leq  \frac{(V_0+\widetilde{V}+V_i) \otimes (V_0+\widetilde{V}+V_i)}{\underline{\rho}} - (U_0+\widetilde{U}+U_i) \\
& \quad \leq  \frac{(\overline{V} +V_i) \otimes (\overline{V} +V_i)}{\underline{\rho}} - (\overline{U}+U_i) + C(|V_0+\widetilde{V}-\overline{V_0}| + |U_0+\widetilde{U}-\overline{U_0}|) \, \id_n \\
& \quad \leq  \frac{{C_Q}}{n} \, \id_n + \overline{R_Q} + \frac{\lambda_*}{64n} \, \id_n \\
& \quad < \frac{|V_0|^2}{n\rho_0} \, \id_n + R_0,
\end{split}
\end{equation}
where we used the definition of $\underline{\rho}$ for the first inequality, and \cite[Remark 2.2]{CVY} which ensures that the constant $C$ is independent of the sequence $V_i$ for the second one. With the constant $C$ fixed, we obtain the third inequality by taking $\delta$ even smaller if necessary. The last inequality follows from \eqref{def_C_Q}.

\textbf{Step 2.} In this step, we verify that the function $ \widetilde{V} + V_i $ satisfies the inequality \eqref{additional control}. This verification is a key part of the argument, as it ensures that $ \widetilde{V} + V_i $ meets the necessary condition to belong to the set $ X_0 $.

We let $$\overline{V_0}:= \fint_Q V_0\,dx\;\;\text{ and }\;\;\overline{\rho_0}:=\fint_Q \rho_0\,dx.$$
Choose  a small $Q$ such that 
\begin{equation*}
\begin{split}
&\left| \frac{V_0}{\rho_0}-\frac{\overline{V_0}}{\overline{\rho_0}} \right| \leq \left| \frac{V_0-\overline{V_0}}{\rho_0} \right| + \left| \frac{\overline{V_0}}{\rho_0}-\frac{\overline{V_0}}{\overline{\rho_0}} \right| \le \frac{1}{\Lambda^2} |V_0-\overline{V_0}| + \frac{\overline{V_0}}{\Lambda^4} |\rho_0-\overline{\rho_0}|
\leq \varepsilon 
\end{split}
\end{equation*}
for some $\varepsilon$ to be determined later. 
Note that $\int_{Q}V_i\,dx=0$, and from \eqref{punch3},
\[
\|V_i\|_{L^2(Q)} \ge \frac{1}{|Q|^{1/2}} \|V_i\|_{L^1(Q)} = \frac{{\underline{\rho}}}{|Q|^{1/2}} \|\mathring V_i\|_{L^1(\mathring Q)}\geq c_1 \sqrt{\underline{\rho}} (\trace   \overline{M_Q})|Q|^{1/2}.
\]
Together with \eqref{M bound}, this allows us to calculate that 
\begin{align*}
\left| \int_Q \frac{V_0 \cdot V_i}{\rho_0} \,dx \right|
&= \left| \int_Q \left( \frac{V_0}{\rho_0} - \frac{\overline{V_0}}{\overline{\rho_0}} \right) V_i\,dx \right| \leq \varepsilon \int_Q| V_i|\,dx \\
& \le \varepsilon |Q|^{1/2} \|V_i\|_{L^2(Q)} \le \frac{\varepsilon }{c_1 \sqrt{\underline{\rho}} (\trace   \overline{M_Q})} \|V_i\|^2_{L^2(Q)}  \\
& \le \frac{\varepsilon \Lambda^2 }{c_1 (\trace   \overline{M_Q})} \int_Q \frac{|V_i|^2}{\rho_0} \;dx \le \frac{4 \varepsilon \Lambda^2 }{c_1 (\trace M)} \int_Q \frac{|V_i|^2}{\rho_0} \;dx \\
& \le \frac{4 \varepsilon \Lambda^2 }{c_1 n \lambda_\ast} \int_Q \frac{|V_i|^2}{\rho_0} \;dx.
\end{align*}
Choosing $\varepsilon$ such that $\tfrac{4\varepsilon \Lambda^2}{n c_1 \lambda_\ast} \le \tfrac18$ we see that
\[
\left| \int_Q \frac{V_0 \cdot V_i}{\rho_0} \,dx \right| \le \frac18 \int_Q \frac{|V_i|^2}{\rho_0} \;dx.
\]

Assuming that
\[
\left| \int_Q \frac{V_0 \cdot \widetilde{V}}{\rho_0} \, dx \right| \leq \frac{1}{8} \int_Q \frac{|\widetilde{V}|^2}{\rho_0} \, dx,
\]
we have
\begin{align*}
\left| \int_Q \frac{V_0 \cdot (\widetilde{V} + V_i)}{\rho_0}\,dx \right| & \leq \left| \int_Q\frac{V_0 \cdot \widetilde{V}}{\rho_0} \right| + \left| \int_Q \frac{V_0 \cdot V_i}{\rho_0} \,dx\right| \\
& \leq \frac{1}{8} \int_Q \frac{|\widetilde{V}|^2}{\rho_0}\,dx + \left| \int_Q \frac{V_0 \cdot V_i}{\rho_0} \,dx\right| \\
& \leq \frac{1}{8} \int_Q \frac{|\widetilde{V}|^2}{\rho_0} \,dx+ \frac{1}{8} \int_Q \frac{|V_i|^2}{\rho_0}\,dx \\
& = \frac{1}{8} \int_Q \frac{|\widetilde{V} + V_i|^2}{\rho_0}\,dx - \frac{1}{4} \int_Q \frac{V_0 \cdot V_i}{\rho_0}\,dx.
\end{align*}
To ensure 
\[ \left| \int_Q \frac{V_0 \cdot (\widetilde{V} + V_i)}{\rho_0} \,dx\right|
\leq \frac{1}{8} \int_Q \frac{|\widetilde{V} + V_i|^2}{\rho_0}\,dx,
\]
we require that
\[
- \frac{1}{4} \int \frac{V_0 \cdot V_i}{\rho_0} \,dx\leq 0.
\]
One can guarantee this condition in the following way. In particular, 
if 
$$
\int_Q \frac{V_0 \cdot V_i}{\rho_0}\,dx \geq 0,
$$ 
then we keep $ V_i $ as it is;  otherwise, if  
$$ 
\int_Q \frac{V_0 \cdot V_i}{\rho_0} \,dx< 0,
$$ 
then we replace $ V_i $ by $ -V_i $. 

With this choice, it follows that there exists a sequence $\{V_i\}$ such that 
\begin{equation}
\label{condition2 of V and Vi}
\left| \int_Q \frac{V_0 \cdot (\widetilde{V} + V_i)}{\rho_0}\,dx \right| \leq \frac{1}{8} \int_Q \frac{|\widetilde{V} + V_i|^2}{\rho_0}\,dx.
\end{equation}
Thus,  by combining  \eqref{condition1 of V+Vi} and \eqref{condition2 of V and Vi}, we conclude that 
$V_i + \widetilde{V} \in X_0$.  

\textbf{Step 3.}
Note that the uniform continuity of the functions $\rho$, $V_0$, $R_0$, and $(\widetilde{V}, \widetilde{U})$ in $\Omega$ guarantees that the size of the cube $Q$ employed in Step~1 and Step~2 can be chosen independently of the point $(x_0, t_0) \in \Omega_1$. 

Fixing a sufficiently small $\delta < \delta_1$, we construct a grid in $\mathbb{R}^n \times \mathbb{R}^+$ with nodes at
\[
(m_1 \delta, \dots, m_n \delta, l \delta), \;\text{where}\;\; l \in \mathbb{N}, \; (m_1, \dots, m_n) \in \mathbb{Z}^n .
\]
From this grid, we select a finite collection of cubes of size $\delta$, with vertices lying on the grid, that cover $\Omega_1$. Denoting these cubes by $\{Q_k : k = 1, \dots, N\}$, we obtain a family with pairwise disjoint interiors, all contained within $\Omega$.

For each $k$, let $\{(V_i^k, U_i^k)\}_{i \in \mathbb{N}}$ denote the sequence of functions, compactly supported in $Q_k$, constructed as in Step~1 and Step~2. We then define the global perturbations in the domain $P$ by
\[
 \widetilde{V}_i :=  \widetilde{V} + \sum_{k=1}^N V_i^k, \qquad  \widetilde{U}_i :=  \widetilde{U} + \sum_{k=1}^N U_i^k.
\]

For a fixed $ i $, the pairs $ (V_i^k, U_i^k) $ for $ k = 1, \dots, N $ have disjoint supports. Hence, by Step 1 and Step 2, we conclude that $ \widetilde{V}_i \in X_0 $. Moreover, for each fixed $ k $, $ V_i^k \rightharpoonup 0 $ weakly, which implies that $ \widetilde{V}_i \rightharpoonup 0 $ weakly in the limit as $ i \to \infty $. Finally, using 
\eqref{punch3}, we obtain
\begin{equation*}
\begin{split}
 \quad \left\| \widetilde{V}_i - \widetilde{V} \right\|_{L^1(P)}& \geq \sum_{k=1}^N \|V_i^k\|_{L^1(Q_k)} \geq \frac{c_1}{4\Lambda} \int_{Q_k} \trace  M \, dx \, dt 
\\&\geq \frac{c_1}{4\Lambda} \int_{\Omega_1} \trace  M \, dx \, dt \geq \frac{c_1}{8\Lambda} \int_P \trace  M \, dx \, dt.
\end{split}
\end{equation*}
This concludes the proof of the lemma.
\end{proof}

This lemma yields the following proposition, which provides control over oscillations via the Reynolds stress term.

\begin{Proposition}
\label{proposition points}
Let $(\widehat{V},\widehat{U})\in X$ be a point of continuity of the identity map $\id$ from $(X,d)$ to $L^2(\R^n\times\R)$. Then, $(\widehat{V},\widehat{U})$ satisfies \eqref{restriction =}. In addition, there exists a positive number $C_0>0$, such that 
 $$
 \int_0^T\int_{\mathbb{T}^n}\frac{|\widehat{V}|^2}{\rho_0}\,dx\,dt\leq C_0\int_0^T\int_{\mathbb{T}^n} \trace R_0\,dx\,dt.
 $$
\end{Proposition}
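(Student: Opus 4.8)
The first assertion---that a point of continuity $(\widehat V,\widehat U)$ satisfies \eqref{restriction =}---follows by the standard Baire-category dichotomy exactly as in \cite{CVY}: if the saturation failed on a set of positive measure, then $\int_P \trace M\,dxdt > 0$, and Lemma \ref{L1-coercivity} would produce a weak-$*$ approximating sequence in $X_0$ that fails to converge in $L^1(P)$ (hence in $L^2$), contradicting continuity of $\id:(X,d)\to L^2$. So the new content is the energy bound, and that is where I will focus.

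The plan for the estimate is to combine the algebraic identity \eqref{restriction =} with the extra constraint \eqref{additional control}, which by density passes to the closure $X$ and hence holds for $(\widehat V,\widehat U)$. First I would take the trace of \eqref{restriction =}: since $\widetilde U=\widehat U$ and $U_0$ are traceless, $\trace U$-terms drop, and one gets the pointwise identity
\[
\frac{|V_0+\widehat V|^2}{\rho_0} = \frac{|V_0|^2}{\rho_0} + \trace R_0 \qquad \text{a.e. in } P.
\]
Expanding the left side as $\frac{|V_0|^2}{\rho_0} + 2\frac{V_0\cdot\widehat V}{\rho_0} + \frac{|\widehat V|^2}{\rho_0}$ and cancelling gives
\[
\frac{|\widehat V|^2}{\rho_0} = \trace R_0 - 2\,\frac{V_0\cdot\widehat V}{\rho_0} \qquad \text{a.e. in } P.
\]
Integrating over $P$ and using \eqref{additional control} for $(\widehat V,\widehat U)$,
\[
\int_P \frac{|\widehat V|^2}{\rho_0}\,dxdt \le \int_P \trace R_0\,dxdt + 2\left|\int_P \frac{V_0\cdot\widehat V}{\rho_0}\,dxdt\right| \le \int_P \trace R_0\,dxdt + \frac14\int_P \frac{|\widehat V|^2}{\rho_0}\,dxdt,
\]
so $\frac34\int_P \frac{|\widehat V|^2}{\rho_0} \le \int_P \trace R_0$, i.e. $\int_P \frac{|\widehat V|^2}{\rho_0}\le \frac43\int_P \trace R_0$. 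Finally, since $\widehat V$ is supported in $P\subset(0,T)\times\mathbb T^n$ and $R_0>0$, I can replace $P$ by $(0,T)\times\mathbb T^n$ on both sides (the left integrand vanishes off $P$, the right only grows), giving the claim with $C_0=4/3$.

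\textbf{Main obstacle.} The genuinely delicate point is justifying that \eqref{additional control} is preserved in the weak-$*$ limit. The functional $(\widetilde V,\widetilde U)\mapsto \int_P \frac{V_0\cdot\widetilde V}{\rho_0}$ is weak-$*$ continuous (it pairs $\widetilde V$ against the fixed $L^1$ function $V_0/\rho_0$, which lies in $L^\infty$ on $P$ by \eqref{bounds rho} and smoothness of $V_0$), but the right-hand side $\int_P \frac{|\widetilde V|^2}{\rho_0}$ is only weak-$*$ \emph{lower} semicontinuous---which is the wrong direction for preserving a "$\le$" on the larger side. I would handle this by noting that for points of continuity of $\id:(X,d)\to L^2$, the approximating sequence converges strongly in $L^2$, so $\int_P \frac{|\widetilde V_j|^2}{\rho_0}\to \int_P\frac{|\widehat V|^2}{\rho_0}$ along such a sequence, and the inequality passes to the limit for $(\widehat V,\widehat U)$. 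Alternatively, one argues that \eqref{additional control} together with \eqref{restriction =} is exactly the content one needs only at points of continuity, so no closure argument for the general element of $X$ is required. Either way, the proof is a short combination of the trace identity, the $L^2$-continuity built into the hypothesis, and constraint \eqref{additional control}; the conceptual weight is entirely in having arranged \eqref{additional control} to survive in $X_0$ (done in Lemma \ref{L1-coercivity}, Step 2) and in the coercivity of Lemma \ref{L1-coercivity}.
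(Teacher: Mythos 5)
Your proposal is correct and follows essentially the same route as the paper: the saturation identity is obtained from Lemma \ref{L1-coercivity} via the Baire-category/diagonal argument, and the energy bound comes from taking the trace of \eqref{restriction =} (the traceless $U$-terms drop), integrating, and absorbing the cross term via \eqref{additional control}, yielding the same constant $C_0=4/3$. Your explicit justification that \eqref{additional control} survives the passage to the limit $(\widehat V,\widehat U)$ — using the strong $L^2$ convergence available at a point of continuity, since the quadratic right-hand side is only weak-$*$ lower semicontinuous — is a detail the paper leaves implicit, and it is the correct way to close that step.
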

\begin{proof}
By definition, there exists a sequence $\{(V_j, U_j)\} \subset X_0$ that converges weak-* to $(\widehat{V}, \widehat{U})$. In particular, we have
\[
V_j \to \widehat{V} \quad \text{strongly in } L^2(P),
\]
and hence also strongly in $L^1_{\text{loc}}(P)$.

From Lemma \ref{L1-coercivity}, for each $(V_j, U_j)$ there exists a subsequence $\{(V_{j,i}, U_{j,i})\}_{i\in \mathbb{N}}$ converging weak-* to $(V_j, U_j)$. Note that $(V_j, U_j)$ satisfies \eqref{ci system}, and we have the estimate
\[
\|V_{j,i} - V_j\|_{L^1(P)} \geq \frac{c_1}{8\Lambda} \int_P \trace M_j \, dx\,dt,
\]
where $M_j$ is given by
\[
M_j := \frac{|V_0|^2}{n\rho_0} \id_n + R_0 - \frac{(V_0 + V_j) \otimes (V_0 + V_j)}{\rho_0} + U_0 + U_j.
\]

By a diagonal argument, we may extract a subsequence $\{(V_{j,i(j)}, U_{j,i(j)})\}$ that converges weakly-* to $(\widehat{V}, \widehat{U})$ and satisfies
\[
\lim_{j \to \infty} \|V_{j,i(j)} - \widehat{V}\|_{L^1(P)} \geq \frac{c_1}{8\Lambda} \int_P \trace M_j \, dx\,dt.
\]
  This yields to 
\begin{equation}
  \label{limit of Mj}
  \lim_{j}\int_{P}\trace M_j\,dx\,dt=0.
\end{equation}
  
We further introduce  
\begin{equation*}
  \widehat{M} := \frac{|V_0|^2}{n\rho_0} \id_n +R_0-\frac{(V_0+\widehat{V})\otimes (V+\widehat{V})}{\rho_0}+U_0+\widehat{U}.
  \end{equation*}
It follows that $M_j\to \widehat{M}$ weak-$*$. Note that $M_j>0$, which implies that $\widehat{M}\geq 0$ a.e.. From \eqref{limit of Mj} and noting that 
$$
\trace M_j\to \trace \widehat{M}\;\;\text{weak-}*,
$$
one obtains  
\begin{equation*}
  \lim_{j}\int_{P} \trace M_j\,dx\,dt = \int_P\tr \widehat{M}\,dx\,dt=0.
\end{equation*}
   Thus, $\tr \widehat{M}=0$ and so $\widehat{M}=0$, which yields that $(\widehat{V},\widehat{U})$ satisfies  \eqref{restriction =}.
    
Meanwhile, note from the definition of $\widehat{M}$ that 
  \begin{equation*}
  \begin{split}
  \tr \widehat{M}&=\frac{|V_0|^2}{\rho_0} + \tr R_0-\frac{|V_0+\widehat{V}|^2}{\rho_0}
  = \tr R_0-\frac{|\widehat{V}|^2}{\rho_0}-\frac{2V_0\cdot \widehat{V}}{\rho_0}.
  \end{split}
  \end{equation*}  
  This yields that
  \begin{equation*}
  \begin{split}
  \int_P\tr R_0\,dx&=\int_P\frac{|\widehat{V}|^2}{\rho_0}\,dx+2\int_P\frac{V_0\cdot\widehat{V}}{\rho_0}\,dx
  \\&\geq \int_P\frac{|\widehat{V}|^2}{\rho_0}\,dx-\frac{1}{4}\int_P\frac{|\widehat{V}|^2}{\rho_0}\,dx
  \\&=\frac{3}{4}\int_P\frac{|\widehat{V}|^2}{\rho_0}\,dx,
  \end{split}
  \end{equation*}
completing the proof.   
\end{proof}

Using Proposition \ref{proposition points}, a Bair\'e category argument leads to the following lemma.
\begin{Lemma}\label{lem conv int}
Let $(\rho_0, V_0, R_0) \in C^0(\R \times \R^n; \R \times \R^n \times \mathbb{S}_0^{n\times n})$
 be given with $\rho_0$ satisfying  \eqref{bounds rho}  and $R_0$ being positive definite in some open set $P \subset \R \times \R^n$. Let $U_0$ be given as in \eqref{U0 condition}. There exist infinitely many $(\widetilde{V},\widetilde{U}) \in L^\infty(\R \times \R^n; \R^n \times  \mathbb{S}_0^{n\times n})$ which are compactly supported in $P$ and satisfy \eqref{ci system} and \eqref{restriction =}. In addition, there exists a positive number $C_0>0$, such that 
 $$\int_0^T\int_{\mathbb{T}^n}\frac{|\widetilde{V}|^2}{\rho_0}\,dx\,dt\leq C_0\int_0^T\int_{\mathbb{T}^n} \tr R_0\,dx\,dt.$$
\end{Lemma}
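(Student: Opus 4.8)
\textbf{Proof proposal for Lemma \ref{lem conv int}.}

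The plan is to run the standard Baire category argument for convex integration, now in the metric space $(X,d)$ defined above, with Proposition \ref{proposition points} supplying both the saturation of the differential inclusion and the new quantitative $L^2$ bound. First I would recall that $(X,d)$ is a complete metric space (the weak-$\ast$ topology on the $L^\infty$-bounded set $X$ is metrizable). The identity map $\id \colon (X,d) \to L^2(P)$ is a Baire-$1$ function — it is the pointwise limit of the continuous truncated maps obtained by mollification, or more directly, as in \cite{CDK,DS,CVY}, one checks it is of Baire class one because it is lower semicontinuous in an appropriate sense — and hence, by the Baire category theorem, its set of points of continuity is residual (a dense $G_\delta$) in $X$.

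Next I would apply Proposition \ref{proposition points}: every point of continuity $(\widehat V,\widehat U)$ of $\id$ satisfies the pointwise identity \eqref{restriction =} a.e.\ in $P$, and therefore $(\rho_0, \widehat V, \widehat U)$ yields, via $\widehat{\v}_\delta := (V_0+\widehat V)/\rho_0$, a genuine weak solution of the compressible system on $P$; in addition such $(\widehat V,\widehat U)$ satisfies the uniform estimate
\[
\int_0^T\int_{\mathbb{T}^n}\frac{|\widehat V|^2}{\rho_0}\,dx\,dt \le C_0 \int_0^T\int_{\mathbb{T}^n}\tr R_0\,dx\,dt.
\]
Since the set of continuity points is residual, hence dense in $X$, and $X_0 \subset X$ consists of smooth functions with $\widetilde V \not\equiv \widehat V$ in general, the set of continuity points is in particular infinite: indeed any nonempty relatively open subset of $X$ must contain a continuity point, and one produces infinitely many pairwise distinct ones by repeatedly choosing small weak-$\ast$ neighborhoods around distinct elements of $X_0$ (for instance by taking perturbations supported in disjoint subregions of $P$, so that the resulting solutions differ). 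Each such continuity point lies in $L^\infty(P;\R^n\times \mathbb{S}_0^{n\times n})$, is compactly supported in $P$, solves \eqref{ci system}, and satisfies \eqref{restriction =} together with the stated $L^2$ bound, which is exactly the conclusion.

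The only genuinely non-routine point is the Baire-class-one property of $\id\colon (X,d)\to L^2(P)$ together with the assertion that the set of continuity points is not merely nonempty but infinite; both are handled exactly as in \cite[Section 3]{CVY} (respectively \cite{CDK,DS}), the former because the $L^1$-coercivity of Lemma \ref{L1-coercivity} forces any point of continuity to be a point where the strong and weak-$\ast$ limits coincide, the latter by the density of the residual set. Everything else — metrizability and completeness of $(X,d)$, the passage from \eqref{restriction =} to a weak solution, and the inheritance of the $L^2$ estimate — is immediate from the constructions already in place, so I do not expect any obstacle beyond transcribing the category argument of \cite{CVY} into the present setting with the extra constraint \eqref{additional control} already built into $X_0$.
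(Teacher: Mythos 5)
Your proposal is correct and follows exactly the route the paper intends: the paper gives no detailed proof of this lemma, stating only that ``a Baire category argument'' combined with Proposition \ref{proposition points} yields it, and your write-up is the standard filling-in of that argument (completeness and metrizability of $(X,d)$, Baire-1 property of the identity into $L^2$, residuality of the continuity points, and Proposition \ref{proposition points} supplying both the saturation \eqref{restriction =} and the uniform $L^2$ bound at each continuity point). No discrepancies with the paper's approach.
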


\bigskip

\section{The proof of the Main result}\label{sec pf}

To prove our main theorem, we will rely on the following lemma.
\begin{Lemma}
\label{global existence}
For any $\eps > 0$, there exists a $\delta_0 = \delta_0(\eps) > 0$ such that for any $0 < \delta \le \delta_0$, there exist infinitely many weak solutions $(\rho_{\delta},\rho_{\delta}\widehat{\v}_{\delta})$ to \eqref{scalling system 111}on $[0, T- \eps]$, satisfying  
$$
\|\widehat{\v}_{\delta} - \v_{\delta}\|_{L^2([0,T - \eps] \times \mathbb{T}^n)}^2\leq C \varepsilon,
$$
where $(\rho_\delta, \rho_\delta \v_\delta, R = R^\varepsilon + \tilde{R}_\delta)$ is the subsolution solving \eqref{eq subsoln}. 
\end{Lemma}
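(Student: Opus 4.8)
The plan is to feed the subsolution built in Lemma~\ref{lem subsoln} into the convex integration result of Lemma~\ref{lem conv int}, and then to read off the stated $L^2$ bound from the new trace estimate in Lemma~\ref{lem conv int} together with the smallness of the Reynolds stress recorded in Lemmas~\ref{incom smooth}--\ref{lem subsoln}. \textbf{Step 1 (the subsolution).} Fix $\eps>0$. I would invoke Lemma~\ref{incom smooth} to get $(\u^\eps,\pi^\eps,R^\eps)$ with $R^\eps>\tfrac\eps4\id_n$ and $\|R^\eps\|_{L^1([0,T-\eps]\times\T^n)}\le C\eps$, and then Lemma~\ref{lem subsoln} to get, for every $\delta\le\delta_0(\eps)$, a smooth subsolution $(\rho_\delta,\,V_\delta:=\rho_\delta\v_\delta,\,R_\delta:=R^\eps+\tilde R_\delta)$ of \eqref{eq subsoln} on $[0,T-\eps]\times\T^n$ with $\tilde R_\delta>0$ (hence $R_\delta>0$), $\|\rho_\delta-1\|_{C^0}\le\eps$ and $\|\tilde R_\delta\|_{L^\infty}\le\eps$ by \eqref{eq subsoln est}. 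Rewriting the momentum balance of \eqref{eq subsoln} in the relaxed form \eqref{isentropic equivalent} exhibits $(\rho_\delta,V_\delta,R_\delta)$ as an admissible datum for the scheme of Section~\ref{sec ci}, with $U_0$ as in \eqref{U0 condition}; and for $\eps\le\tfrac12$ the density obeys the bound \eqref{bounds rho} with $\Lambda^2=1/(1-\eps)\le 2$, independently of $\delta$.

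\textbf{Step 2 (convex integration).} Next I would apply Lemma~\ref{lem conv int} with $P:=(0,T-\eps)\times\T^n$, on which $R_\delta$ is positive definite for every $\delta\le\delta_0(\eps)$: this produces infinitely many $(\widetilde V,\widetilde U)$, compactly supported in $P$, solving \eqref{ci system} and the algebraic identity \eqref{restriction =}, and satisfying
\[
\int_{[0,T-\eps]\times\T^n}\frac{|\widetilde V|^2}{\rho_\delta}\,dx\,dt\ \le\ C_0\int_{[0,T-\eps]\times\T^n}\tr R_\delta\,dx\,dt ,
\]
where the constant $C_0$ depends only on $\Lambda$ and $n$, hence is uniform in $\delta$ (the system \eqref{ci system}--\eqref{U0 condition}, and the new constraint \eqref{additional control} underlying this estimate, never see the pressure $p_\delta$). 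Setting $\widehat\v_\delta:=(V_\delta+\widetilde V)/\rho_\delta=\v_\delta+\widetilde V/\rho_\delta$, the discussion following \eqref{U0 condition} shows that, since \eqref{restriction =} makes the Reynolds term drop out, each $(\rho_\delta,\rho_\delta\widehat\v_\delta)$ is a weak solution of \eqref{scalling system 111} on $[0,T-\eps]$, with initial datum assembled as in Section~\ref{sec pf}. Distinct perturbations $\widetilde V$ yield distinct $\widehat\v_\delta$ because $\rho_\delta>0$, so there are infinitely many.

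\textbf{Step 3 (the uniform estimate).} It remains to extract the stated bound. Since $\widehat\v_\delta-\v_\delta=\widetilde V/\rho_\delta$ is supported in $P$ and $\rho_\delta\le\Lambda^2$, the previous display gives
\[
\|\widehat\v_\delta-\v_\delta\|_{L^2([0,T-\eps]\times\T^n)}^2=\int_{[0,T-\eps]\times\T^n}\frac{|\widetilde V|^2}{\rho_\delta^2}\,dx\,dt\le\Lambda^2\int_{[0,T-\eps]\times\T^n}\frac{|\widetilde V|^2}{\rho_\delta}\,dx\,dt\le\Lambda^2 C_0\int_{[0,T-\eps]\times\T^n}\tr R_\delta\,dx\,dt .
\]
Since $R^\eps,\tilde R_\delta>0$ one may split $\tr R_\delta=\tr R^\eps+\tr\tilde R_\delta$ and bound, via \eqref{eq reg est} and \eqref{eq subsoln est},
\[
\int_{[0,T-\eps]\times\T^n}\tr R_\delta\,dx\,dt\le C_n\,\|R^\eps\|_{L^1([0,T-\eps]\times\T^n)}+C_n\,\big|[0,T-\eps]\times\T^n\big|\,\|\tilde R_\delta\|_{L^\infty}\le C(n,T)\,\eps ,
\]
so that $\|\widehat\v_\delta-\v_\delta\|_{L^2([0,T-\eps]\times\T^n)}^2\le C\eps$ with $C=C(n,T)$ independent of $\delta$, which is the claim.

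\textbf{Main obstacle.} I expect the only genuinely delicate point to be \emph{uniformity in the Mach number $\delta$}: the whole estimate is useless unless $\Lambda$ and $C_0$ stay bounded as $\delta\to0$. Control of $\Lambda$ is immediate from the $C^0$-smallness of $\rho_\delta-1$ in Lemma~\ref{lem subsoln}; the $\delta$-independence of $C_0$ is the crux, and it is exactly what the new constraint \eqref{additional control} and the ensuing trace bound of Lemma~\ref{lem conv int} are for — the bare $L^\infty$ convex integration provides no way to convert the $O(\eps)$ smallness of $\int\tr R_\delta$ into the $\delta$-uniform $L^2$-smallness of the turbulent correction $\widehat\v_\delta-\v_\delta$ that the low Mach number limit requires. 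A minor bookkeeping point is that the support $P$ of the perturbation must be chosen independently of $\delta$, which is automatic here since $R_\delta>0$ on the whole slab for all $\delta\le\delta_0(\eps)$.
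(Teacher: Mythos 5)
Your proposal is correct and follows essentially the same route as the paper: feed the subsolution of Lemma \ref{lem subsoln} into Lemma \ref{lem conv int}, use the new trace estimate to bound $\int |\widetilde V|^2/\rho_\delta$ by $\int \trace R_\delta$, and then bound $\trace R_\delta = \trace R^\eps + \trace \tilde R_\delta$ by $C\eps$ via \eqref{eq reg est} and \eqref{eq subsoln est}. The only cosmetic difference is that the paper bounds $\int \trace R^\eps$ by writing it out as $(|\u|^2)^\eps - |\u^\eps|^2 + n\eps/2$ rather than citing $\|R^\eps\|_{L^1}\le C\eps$ directly, which is equivalent.
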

\begin{proof}
Recalling Lemma \ref{subsolution}, we have
\[
\rho_{\delta} \v_{\delta} = \m_{\delta} + \u^{\varepsilon}, \quad \text{and} \quad \rho_\delta(t,x) = 1 + \delta^2 \varrho_{\delta, K_\ast}(t,x), 
\]
where $\varrho_{\delta, K_\ast}$ is given in \eqref{eq varrho}. From Lemma \ref{lem conv int} it follows that there exist infinitely many weak solutions to \eqref{scalling system 111} of the form $({\rho}_{\delta}, {\rho}_{\delta} \widehat{\v}_{\delta})$.

From the convex integration procedure in the previous section, we construct the perturbation as
\[
\widetilde{V} = \rho_\delta \widehat{\v}_{\delta} - \rho_\delta \v_{\delta}.
\]
So applying Lemma~\ref{lem conv int} again, we have the estimate
\begin{align*}
\| \widehat{\v}_{\delta} - \v_{\delta} \|^2_{L^2([0,T - \eps] \times \mathbb{T}^n)} &= \int_0^{T- \eps}\int_{\mathbb{T}^n} \frac{|\widetilde{V}|^2}{\rho_\delta^2} \, dx\,dt \\
&\leq c \int_0^{T- \eps}\int_{\mathbb{T}^n} \frac{|\widetilde{V}|^2}{\rho_\delta} \, dx\,dt \\
& \leq C \int_0^{T - \eps}\int_{\mathbb{T}^n} \trace R \, dx\,dt,
\end{align*}
where $ R = R^{\varepsilon} + \tilde{R}_{\delta}$, and we have used the estimate on $\rho_\delta$ in \eqref{eq subsoln est} to derive the first inequality. From the definition of $R^\eps$ we have
\[
\tr R^\eps = \tr (\u \otimes \u)^\eps - \tr (\u^\eps \otimes \u^\eps) + \frac{n \eps}{2} = (|\u|^2)^\eps - |\u^\eps|^2 + \frac{n \eps}{2} > 0,
\]
and thus by Lemma \ref{incom smooth},
\[
\int_0^{T - \eps}\int_{\mathbb{T}^n} \trace R^\eps \, dx\,dt \le \|(\u\otimes \u)^{\varepsilon}-\u^\eps \otimes \u^\eps\|_{L^1([0,T -\eps] \times \mathbb{T}^n)} + \int_0^{T - \eps}\int_{\mathbb{T}^n} \frac{n \eps}{2} \, dx\,dt \le C \eps. 
\]
Recall from Lemma \ref{subsolution} that for $\delta < \delta_0$,  $ \| \tilde{R}_\delta \|_{{L^\infty([0,T - \eps] \times \mathbb{T}^n)}} \leq C \varepsilon $. Together we obtain
\[
\| \widehat{\v}_{\delta} - \v_{\delta} \|^2_{L^{2}([0,T - \eps] \times \mathbb{T}^n)} \leq C \varepsilon,
\]
completing the proof of the lemma. 
\end{proof}

Equipped with the preceding lemmas, we are now prepared to establish our main result. 
\begin{proof}[Proof of Theorem \ref{main result}]
Given any $L^2$-bounded weak solution $\u$ to the incompressible Euler equations \eqref{incom Euler} on the time interval $[0, T)$, for any $0 < T' < T$ there exists $\eps > 0$ with $T' < T - \eps$. We apply Lemma \ref{smooth euler}, Lemma \ref{subsolution}, and Lemma \ref{global existence} to construct a family of weak solutions $(\rho_{\delta},\rho_{\delta}\widehat{\v}_{\delta})$ to the scaled compressible Euler equations \eqref{scalling system 111} on the time interval $[0, T-\eps]$, and hence on $[0, T']$. Note that here $\delta = \delta(\eps)$ with the property that $\delta \to 0 \ \Leftrightarrow \ \eps \to 0$. 
We can take the following initial data  
$$
(\rho_{\delta}^0, \m_\delta^0)(x) := (\rho_{\delta}, \rho_{\delta}\widehat{\v}_{\delta})(0,x).
$$

Lemma \ref{subsolution} ensures that
\[
\rho_{\delta}\to1 \quad \text{in } C^0([0,T'] \times\mathbb{T}^n) \qquad \text{as } \ \delta \to 0.
\]
From Lemma \ref{global existence} and Lemma \ref{subsolution} we find that 
\begin{align*}
\| \rho_\delta \widehat{\v}_{\delta} - \rho_\delta \v_{\delta} \|_{L^{2}([0,T'] \times \mathbb{T}^n)} & = \| \widetilde{V} \|_{L^{2}([0,T'] \times \mathbb{T}^n)} \\
& \le C \| \sqrt{\rho_\delta} \|_{L^\infty([0,T'] \times \mathbb{T}^n)} \sqrt{\| \trace R \|_{L^{1}([0,T'] \times \mathbb{T}^n)}} \\
& \le C \eps + C(\eps) \delta \ \ \to 0 \qquad \text{as } \ \delta \to 0, \\
\| \sqrt{\rho_\delta} \widehat{\v}_{\delta} - \sqrt{\rho_\delta} \v_{\delta} \|_{L^{2}([0,T'] \times \mathbb{T}^n)} & = \| \widetilde{V}/ \sqrt{\rho_\delta} \|_{L^{2}([0,T'] \times \mathbb{T}^n)} \\
& \le C \sqrt{\| \trace R \|_{L^{1}([0,T'] \times \mathbb{T}^n)}} \\
&  \le C \eps + C(\eps) \delta \ \ \to 0 \qquad \text{as } \ \delta \to 0.
\end{align*}
Further evoking \eqref{bound of density in delta}, \eqref{m close} and \eqref{eq reg est} we see that 
\begin{align*}
\| \rho_\delta \widehat{\v}_{\delta} - \u \|_{L^{2}([0,T'] \times \mathbb{T}^n)} & \le \| \rho_\delta \widehat{\v}_{\delta} - \rho_\delta \v_{\delta} \|_{L^{2}([0,T'] \times \mathbb{T}^n)} + \| \rho_\delta \v_{\delta} - \u^\eps \|_{L^{2}([0,T'] \times \mathbb{T}^n)} \\
&\qquad + \| \u^\eps - \u \|_{L^{2}([0,T'] \times \mathbb{T}^n)} \\
& \le C \eps + C(\eps) \delta \ \ \to 0 \qquad \text{as } \ \delta \to 0,\\
\| \sqrt{\rho_\delta} \widehat{\v}_{\delta} - \u \|_{L^{2}([0,T'] \times \mathbb{T}^n)} & \le \| \sqrt{\rho_\delta} \widehat{\v}_{\delta} - \rho_\delta \widehat{\v}_{\delta} \|_{L^{2}([0,T'] \times \mathbb{T}^n)} + \| \rho_\delta \widehat{\v}_{\delta} - \u\|_{L^{2}([0,T'] \times \mathbb{T}^n)}\\
& \le \| \sqrt{\rho_\delta} - \rho_\delta \|_{L^{\infty}([0,T'] \times \mathbb{T}^n)} \| \widehat{\v}_\delta \|_{L^{2}([0,T'] \times \mathbb{T}^n)} + \| \rho_\delta \widehat{\v}_{\delta} - \u\|_{L^{2}([0,T'] \times \mathbb{T}^n)}\\
& \le C \eps + C(\eps) \delta \ \ \to 0 \qquad \text{as } \ \delta \to 0.
\end{align*}

For any $\phi \in C^\infty_c([0, T) \times \mathbb{T}^n; \R^n)$ with $\Dv\phi=0$, say the time support of $\phi$ is in $[0, T']$ for some $T' < T$. Then
\begin{equation*}
\begin{split}
\int^{T'}_0 \int_{\mathbb{T}^3} \rho_{\delta}\widehat{\v}_{\delta} \cdot \partial_t \phi + \sqrt{\rho_{\delta}}\widehat{\v}_{\delta} \otimes\sqrt{\rho_{\delta}}\widehat{\v}_{\delta} : \nabla \phi  \,dxdt +\int_0^{T'} \int_{\mathbb{T}^3}\frac{\rho_{\delta}^{\gamma}}{\delta^2}\Dv \phi\,dx\,dt
=  -\int_{\mathbb{T}^3} \m_{\delta}^0 \cdot \phi(\cdot, 0) \,dx,
\end{split}
\end{equation*}
Using the convergence results in the above, we find that the left-hand side of the above equality converges, as $\delta \to 0$, to
$$
\int^{T'}_0 \int_{\mathbb{T}^3} \u \cdot \partial_t \phi + \u \otimes \u : \nabla \phi  \,dxdt, 
$$
which, recalling the fact that $\u$ is a weak solution to the incompressible Euler equation, is equal to
\[
-\int_{\mathbb{T}^3} \u^0 \cdot \phi(\cdot, 0) \,dx.
\]
Therefore, we must have 
$$
\int_{\mathbb{T}^3} \m_{\delta}^0 \cdot \phi(\cdot, 0) \,dx\to \int_{\mathbb{T}^3} \u^0 \cdot \phi(\cdot, 0) \,dx \qquad \text{as }\ \delta \to 0.
$$
Putting together all of the above, we have proved the main result, Theorem \ref{main result}.
\end{proof}


\bigskip

\section*{Acknowledgments}
Robin Ming Chen is partially supported by the NSF grant DMS-2205910. 
Alexis Vasseur is partially supported by the NSF grants DMS-2219434 and DMS-2306852. 
Dehua Wang is partially supported by the NSF grant DMS-2510532. 
Cheng Yu is partially supported by the NSF grant DMS-2510425 and by the Simons Foundation MPS-TSM-00007824.

\bigskip\bigskip

\end{document}